\documentclass{baustms}

\citesort

\usepackage{doi}
\usepackage{orcidlink}
\usepackage{tikz-cd}

\DeclareMathOperator{\Tor}{Tor}
\DeclareMathOperator{\supp}{supp}

\theoremstyle{cupthm}
\newtheorem{thm}{Theorem}[section]
\newtheorem{prop}[thm]{Proposition}
\newtheorem{cor}[thm]{Corollary}
\newtheorem{lemma}[thm]{Lemma}
\theoremstyle{cupdefn}
\newtheorem{defn}[thm]{Definition}
\newtheorem{nota}[thm]{Notation}
\newtheorem{example}[thm]{Example}
\numberwithin{equation}{section}

\makeatletter
\def\@cuplistvals{%
    \parsep \z@
    \leftmargin \z@
    \labelsep 0.4em
    \itemindent \z@
    \topsep 3\p@
    \partopsep \z@
    \itemsep \z@}
\def\@enum@{%
    \expandafter\list\csname label\@enumctr\endcsname
    {\usecounter{\@enumctr}%
    \@cuplistvals
    \settowidth\labelwidth{\normalfont 8.}%
    \itemindent\labelwidth
    \advance\itemindent\labelsep
    \def\makelabel##1{\hbox to\labelwidth{\normalfont##1\hss}}}}
\def\itemize{%
  \ifnum \@itemdepth >\thr@@ \@toodeep\else
    \advance\@itemdepth\@ne
    \edef\@itemitem{labelitem\romannumeral\the\@itemdepth}%
    \expandafter\list\csname\@itemitem\endcsname
      {\@cuplistvals
      \topsep \z@
      \settowidth\labelwidth{\csname\@itemitem\endcsname}%
      \itemindent\labelwidth
      \advance\itemindent\labelsep
      \def\makelabel##1{\hbox to\labelwidth{##1\hss}}}\fi}
\makeatother

\tikzset{
  wrapdown/.style={
    rounded corners=6pt,
    to path={-- ([xshift=2ex]\tikztostart.east)
             -- ([xshift=2ex]\tikztostart.east |- #1)
             -- ([xshift=-2ex]\tikztotarget.west |- #1) \tikztonodes
             -- ([xshift=-2ex]\tikztotarget.west)
             -- (\tikztotarget)}}}

\hypersetup{
  pdftitle={Discrete Coefficients and Open Invariant Covers in the Homology of Ample Groupoids},
  pdfauthor={Luciano Melodia},
  pdfsubject={Groupoid homology},
  pdfkeywords={Ample groupoid, Groupoid homology, Mayer-Vietoris sequence}}

\newcommand{\G}{\mathcal G}
\newcommand{\Z}{\mathbb Z}
\newcommand{\defeq}{\mathrel{\mathop:}=}
\newcommand{\xto}[1]{\xrightarrow{\ #1\ }}
\newcommand{\sto}[1]{\xrightarrow{#1}}

\begin{document}
\runningtitle{Coefficients and Invariant Covers}
\title{Discrete Coefficients and Open Invariant Covers in the Homology of Ample Groupoids}

\author[1]{Luciano Melodia\,\orcidlink{0000-0002-7584-7287}}
\address[1]{Department of Mathematics, Friedrich-Alexander-Universit\"at Erlangen--N\"urnberg, Cauerstra\ss e 11, 91058 Erlangen, Germany\email{luciano.melodia@fau.de}}

\authorheadline{L. Melodia}

\support{This paper is a writeup of part of the author's master's thesis \cite{MelodiaThesis}, which is available as a preprint. Earlier versions of the results of \S\ref{sec:coefficients} and \S\ref{sec:covers} appear there, and Theorem~\ref{thm:B} corrects the use that is made of one of them in that text. Theorem~\ref{thm:A} and the computation of \S\ref{sec:example} do not appear there at all.}

\begin{abstract}
The homology of an ample groupoid is computed from the complex of compactly supported continuous functions on the nerve. Two hypotheses routinely imposed on this complex behave in opposite ways. We show that the comparison map from the integral chain complex tensored with the coefficient group to the chain complex with coefficients is always injective, and that it is surjective if and only if every compactly supported continuous function into the coefficient group is locally constant. The universal coefficient theorem for ample groupoids is therefore a discrete coefficient statement, and it already fails for the real numbers on the Cantor set. We then show that a cover of the unit space by two clopen invariant subsets forces the groupoid to split as a disjoint union of three clopen invariant reductions, so that the associated Mayer--Vietoris sequence has vanishing connecting maps and computes nothing. Dropping closedness repairs this, since two open invariant subsets covering the unit space give a Mayer--Vietoris sequence for every topological abelian coefficient group. For an integer action on a two point compactification of the integers we compute that sequence and find that its connecting map is an isomorphism of infinite cyclic groups, which no cover of a unit space by clopen invariant subsets can achieve.
\end{abstract}

\classification[2020]{Primary 22A22; Secondary 37B10, 55N35}
\keywords{Ample groupoid, Groupoid homology, Mayer--Vietoris sequence}

\maketitle

\section{Introduction}
\label{sec:intro}

Crainic and Moerdijk introduced homology groups for \'etale groupoids \cite{CrainicMoerdijk2000}. When the groupoid is ample, Matui gave a concrete model for them \cite[\S3.1]{Matui2012}: the chain group in degree \(n\) consists of the compactly supported continuous functions from the space of composable \(n\)-tuples into a topological abelian coefficient group, and the boundary is the alternating sum of the pushforwards along the face maps of the nerve. These groups are now a standard invariant of ample groupoids, and two hypotheses recur whenever one computes with them. Neither hypothesis is forced by the definition of the complex, and it is not obvious how much either of them is doing.

The first concerns coefficients. If the coefficient group \(A\) is discrete, then a compactly supported continuous function into \(A\) is locally constant with finite image, the degree \(n\) chain group is the integral chain group tensored with \(A\), and a universal coefficient theorem follows from the algebraic one for complexes of free abelian groups. This is a theorem of Miller and Steinberg \cite[Theorem~4.1]{MillerSteinberg2024}. If \(A\) carries a topology, the compactly supported continuous functions into \(A\) may form a strictly larger group than the span of those with finite image, and it is unclear how much of the theorem survives, or what it is that the discreteness of the coefficients supplies.

The second concerns cutting the unit space. One reduces an ample groupoid to invariant subsets of its unit space, and the reductions usually taken are to \emph{clopen} invariant subsets, since extension by zero along a clopen inclusion is transparent. A cover of the unit space by two clopen invariant subsets yields a short exact sequence of chain complexes, and hence a Mayer--Vietoris sequence. This is Theorem~3.3.10 of the author's master's thesis \cite{MelodiaThesis}, where the sequence is described as a tool for computing homology by cutting the unit space into pieces. What such a sequence computes, and whether it computes anything at all, is the question that Theorem~\ref{thm:B} below answers.

We settle both questions, and the answers point in opposite directions. Discreteness of the coefficients cannot be relaxed at all. Clopenness of the cover, by contrast, is too strong, since it forces the Mayer--Vietoris sequence to carry no information, and it must be weakened to openness before that sequence carries information which the direct sum decomposition of Theorem~\ref{thm:B} does not already contain, as the computation of \S\ref{sec:example} shows for an integer action on a compactification of the integers.

Our first result determines the range of the comparison map. Write \(C_c(X,A)\) for the group of compactly supported continuous maps from a space \(X\) to an abelian topological group \(A\), and write \(\chi_V\) for the characteristic function of a subset \(V\subseteq X\), which lies in \(C_c(X,\Z)\) as soon as \(V\) is compact and open in the space \(X\).

\begin{thm}
\label{thm:A}
Let \(X\) be a locally compact Hausdorff space with a basis of compact open sets, let \(A\) be an abelian topological group, and let
\[
\Phi_X\colon C_c(X,\Z)\otimes_{\Z}A\longrightarrow C_c(X,A),
\qquad
\Phi_X(f\otimes a)(x)=f(x)\,a ,
\]
be the canonical homomorphism. Then \(\Phi_X\) is injective, and the following three conditions on the pair \((X,A)\) are equivalent: \textup{(i)} every element of \(C_c(X,A)\) is locally constant, \textup{(ii)} the map \(\Phi_X\) is surjective, \textup{(iii)} the map \(\Phi_X\) is an isomorphism of abelian groups. In particular they hold whenever the group \(A\) is discrete.
\end{thm}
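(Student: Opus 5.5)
The plan is to first put every element of \(C_c(X,\Z)\otimes_{\Z}A\) into a normal form. Any \(f\in C_c(X,\Z)\) is locally constant with compact support, so it has finite image. Since \(X\) has a basis of compact open sets, the support of \(f\) can be cut into finitely many pairwise disjoint compact open sets \(V_1,\dots,V_m\) on each of which \(f\) is constant. Applying this to a finite sum \(\sum_k f_k\otimes a_k\), I will use the compact open sets \(\{x: f_k(x)=c\}\) and intersect them to get a common refinement. The outcome is that every tensor can be written as \(\sum_{j=1}^m \chi_{V_j}\otimes b_j\) with the \(V_j\) pairwise disjoint, nonempty and compact open, and \(b_j\in A\). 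This uses only that \(\chi_{U\sqcup U'}=\chi_U+\chi_{U'}\) together with bilinearity.

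\emph{Injectivity.} Under \(\Phi_X\) the normal form goes to the function equal to \(b_j\) on \(V_j\) and to \(0\) off \(\bigcup_j V_j\). If this function vanishes, then each \(b_j=0\), because \(V_j\neq\emptyset\). So the tensor is zero. The one point needing care is that the normal form really represents the given element, and not merely something with the same image. That is exactly what the algebraic refinement step guarantees, so injectivity needs no flatness or freeness argument.

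\emph{The equivalences.} The image of \(\Phi_X\) consists precisely of the functions of the form \(\sum_j \chi_{V_j} b_j\), which are locally constant. Hence (ii) implies (i). Also (ii) is equivalent to (iii) by injectivity. The remaining step, (i) implies (ii), is the main one. Take \(g\in C_c(X,A)\) that is locally constant, and let \(K\) be a compact set with \(g=0\) off \(K\). Each \(x\in K\) has an open neighbourhood on which \(g\) is constant, and inside it a compact open neighbourhood. Compactness gives finitely many compact open sets \(W_1,\dots,W_r\) covering \(K\) with \(g\) constant on each. Disjointify them as \(W_i\setminus(W_1\cup\dots\cup W_{i-1})\); these sets are still compact open, because compact open sets in a Hausdorff space are clopen. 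Then \(g=\sum_i \chi_{W_i'}\,g(x_i)\) for chosen points \(x_i\in W_i'\), since \(g\) vanishes outside \(K\subseteq\bigcup W_i\). This expresses \(g\) as an image under \(\Phi_X\).

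Finally, if \(A\) is discrete, any continuous map into \(A\) is locally constant, so (i) holds. I expect the main obstacle to be bookkeeping rather than ideas: keeping track that the sets remain compact open under intersections and differences (this uses Hausdorffness), and that the normal form is an identity in the tensor product itself.
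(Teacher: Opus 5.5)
Your proposal is correct and is essentially the paper's proof: your normal form \(\sum_j\chi_{V_j}\otimes b_j\) is the finite disjoint refinement of Lemma~\ref{lem:refine} followed by regrouping. Injectivity follows in the same way, by evaluating at a point of each nonempty \(V_j\), and (ii)\(\Rightarrow\)(i) and (ii)\(\Leftrightarrow\)(iii) are argued identically. The differences are cosmetic. For (i)\(\Rightarrow\)(ii) you disjointify a finite compact open cover on which \(g\) is constant, whereas the paper uses the finitely many nonzero fibres of \(g\), which are compact open. In your level-set refinement, intersect only tuples of values that are not all zero, since \(\{f_k=0\}\) need not be compact.
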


Discreteness of \(A\) is sufficient for these conditions but not necessary, and the conditions do fail: for the Cantor space and the real numbers with their usual topology, the map \(\Phi_X\) is injective and not surjective. Both points are settled in \S\ref{sec:coefficients}, where we also record the resulting universal coefficient theorem and delimit the scope that it has.

Our second result concerns clopen invariant covers. Call a subset \(U\) of the unit space of a groupoid \(\G\) invariant if \(r(\gamma)\in U\) for every arrow \(\gamma\) with \(s(\gamma)\in U\), and write \(\G|_U\) for the reduction of \(\G\) to \(U\). Both of these notions are recalled in \S\ref{sec:complex} below.

\begin{thm}
\label{thm:B}
Let \(\G\) be an ample groupoid, let \(A\) be an abelian topological group, and let \(U_1,U_2\) be clopen invariant subsets of the unit space of \(\G\) with \(U_1\cup U_2=\G^{(0)}\). Put \(W_1\defeq U_1\setminus U_2\), \(W_0\defeq U_1\cap U_2\) and \(W_2\defeq U_2\setminus U_1\). Then \(\G\) is the disjoint union of the clopen subgroupoids \(\G|_{W_1}\), \(\G|_{W_0}\) and \(\G|_{W_2}\), and for every degree \(n\geq 0\) the map \(\theta_n\) induced by extension by zero is an isomorphism
\[
H_n(\G|_{W_1};A)\oplus H_n(\G|_{W_0};A)\oplus H_n(\G|_{W_2};A)
\xto{\theta_n}
H_n(\G;A).
\]
The Mayer--Vietoris sequence of the cover \(U_1,U_2\) provided by Theorem~\ref{thm:C} below has vanishing connecting homomorphisms, and it splits into split short exact sequences in every degree \(n\geq0\), so that it carries no information beyond that decomposition.
\end{thm}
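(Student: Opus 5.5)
We first establish the groupoid decomposition. Since \(U_1,U_2\) are clopen and \(U_1\cup U_2=\G^{(0)}\), the three sets \(W_1=U_1\setminus U_2=\G^{(0)}\setminus U_2\), \(W_0=U_1\cap U_2\) and \(W_2=\G^{(0)}\setminus U_1\) are clopen. They partition \(\G^{(0)}\). Complements of invariant sets are invariant: if \(s(\gamma)\notin U\) but \(r(\gamma)\in U\), then \(\gamma^{-1}\) would carry a point of \(U\) out of \(U\). Intersections of invariant sets are also invariant. Hence each \(W_i\) is invariant. Invariance gives \(s(\gamma)\in W_i\iff r(\gamma)\in W_i\), so \(\G\) is the disjoint union of the sets \(\G|_{W_i}=s^{-1}(W_i)\). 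These are clopen because \(s\) is continuous, and each is an open subgroupoid, hence ample.

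Next we pass to the nerve. For \(n\ge1\) a composable tuple \((\gamma_1,\dots,\gamma_n)\) has all its units in one orbit, so \(\G^{(n)}=\bigsqcup_i \G|_{W_i}^{(n)}\) is a partition into clopen sets. In degree \(0\) this is just the partition of \(\G^{(0)}\). For clopen pieces, extension by zero and restriction give a natural isomorphism
\[
C_c(\G^{(n)},A)\cong \bigoplus_i C_c(\G|_{W_i}^{(n)},A).
\]
A compactly supported continuous function restricts to a clopen piece with compact support, and finitely many summands recombine. Each face map \(d_j\) carries \(\G|_{W_i}^{(n)}\) into \(\G|_{W_i}^{(n-1)}\), and pushforward is computed as a sum over fibres. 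The boundary is therefore diagonal, and we get an isomorphism of chain complexes \(\bigoplus_i C_*(\G|_{W_i};A)\to C_*(\G;A)\) given by extension by zero. Homology commutes with finite direct sums, so \(\theta_n\) is an isomorphism for every \(n\ge0\).

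Finally we treat the Mayer--Vietoris sequence. I expect Theorem~\ref{thm:C} to build it from the short exact sequence
\[
0\to C_*(\G|_{U_1\cap U_2};A)\xto{(\iota,-\iota)} C_*(\G|_{U_1};A)\oplus C_*(\G|_{U_2};A)\xto{\iota+\iota} C_*(\G;A)\to0 .
\]
Under the decompositions \(\G|_{U_1}=\G|_{W_1}\sqcup\G|_{W_0}\), \(\G|_{U_2}=\G|_{W_0}\sqcup\G|_{W_2}\) and the one for \(\G\), this becomes a direct sum of three elementary sequences: \(0\to0\to C_*(\G|_{W_1})\to C_*(\G|_{W_1})\to0\), the analogous one for \(W_2\), and \(0\to C_*(\G|_{W_0})\xto{(1,-1)} C_*(\G|_{W_0})^{2}\xto{1+1}C_*(\G|_{W_0})\to0\). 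Each of these is split as a sequence of chain complexes, with chain-map splittings such as \(x\mapsto(x,0)\). A short exact sequence of complexes that splits degreewise by chain maps has zero connecting homomorphism. So all \(\partial_n\) vanish and the long sequence breaks into the short exact sequences
\[
0\to H_n(\G|_{W_0})\to H_n(\G|_{U_1})\oplus H_n(\G|_{U_2})\to H_n(\G)\to0 .
\]
These split via the induced chain splitting, and after identifying terms by the \(\theta\)-type isomorphisms they reduce to the decomposition above.

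The main obstacle is this last step. We must check carefully that the maps of Theorem~\ref{thm:C}, including its sign conventions and its possibly more general construction for open covers, agree with the direct-sum decomposition. The splitting also has to be a chain map and not merely degreewise, so that it descends to homology. The remaining steps are bookkeeping with clopen partitions.
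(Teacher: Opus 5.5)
Your proposal is correct and follows the paper's proof essentially step for step. The sequence you guess for Theorem~\ref{thm:C} is exactly the one in Proposition~\ref{prop:ses}, and under the clopen decomposition it splits at the chain level, so the check you flag as the main obstacle goes through. The only difference is cosmetic: the paper uses the chain retraction of $\alpha_\bullet$ onto the $W_0$ summand to get injectivity of $H_n(\alpha)$ and then reads off $\partial_n=0$ from exactness, whereas you use a chain section of $\beta_\bullet$ to kill $\partial_n$ directly, which is the same splitting seen from the other end.
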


So a clopen invariant cover never relates a groupoid to genuinely smaller pieces. It only records that homology takes disjoint unions to direct sums. The hypothesis to drop is closedness and not invariance, because invariance is what makes the reductions cover the nerve, whereas closedness is what makes each of the three reductions split off as a direct summand of the whole Moore complex in every degree \(n\geq0\).

\begin{thm}
\label{thm:C}
Let \(\G\) be an ample groupoid, let \(A\) be an abelian topological group, and let \(U_1,U_2\) be open invariant subsets of the unit space of \(\G\) with \(U_1\cup U_2=\G^{(0)}\). Put \(U_0\defeq U_1\cap U_2\). Then there is a long exact sequence
\[
\begin{tikzcd}[column sep=3.0em, row sep=3.2em, arrow style=math font, cramped]
\cdots \arrow[r,"H_n(\alpha)"]
 & H_n(\G|_{U_1};A)\oplus H_n(\G|_{U_2};A) \arrow[r,"H_n(\beta)"]
   \arrow[d, phantom, ""{coordinate, name=Z}]
 & H_n(\G;A) \arrow[dll, wrapdown=Z, "{\textstyle\partial_n}"{description}] \\
H_{n-1}(\G|_{U_0};A) \arrow[r,"H_{n-1}(\alpha)"]
 & H_{n-1}(\G|_{U_1};A)\oplus H_{n-1}(\G|_{U_2};A) \arrow[r,"H_{n-1}(\beta)"] & \cdots
\end{tikzcd}
\]
in which \(H_n(\alpha)\) and \(H_n(\beta)\) are induced by the chain maps \(\alpha_\bullet\) and \(\beta_\bullet\) of extension by zero written out in Proposition~\ref{prop:ses}, and \(\partial_n\) is the connecting homomorphism of the short exact sequence of chain complexes that is constructed in that proposition.
\end{thm}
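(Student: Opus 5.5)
The long exact sequence is the homology sequence of a short exact sequence of chain complexes
\[
0\to C_\bullet(\G|_{U_0};A)\xto{\alpha_\bullet} C_\bullet(\G|_{U_1};A)\oplus C_\bullet(\G|_{U_2};A)\xto{\beta_\bullet} C_\bullet(\G;A)\to 0 ,
\]
so the plan is to build this sequence in each degree, check that its maps commute with the boundaries, and then apply the snake lemma. The statement refers to this sequence as Proposition~\ref{prop:ses}; I take that proposition to be exactly the claim proved in the steps below, and Theorem~\ref{thm:C} follows from it by the standard connecting homomorphism construction.

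\emph{Nerves.} Since \(U_i\) is open and invariant, the reduction \(\G|_{U_i}=r^{-1}(U_i)=s^{-1}(U_i)\) is an open subgroupoid. Its nerve \(\G|_{U_i}^{(n)}\) is the open subset of \(\G^{(n)}\) consisting of composable tuples all of whose arrows have range and source in \(U_i\). Invariance gives that a composable tuple has all its units in one orbit. Hence the tuple lies in \(\G|_{U_i}^{(n)}\) as soon as a single one of its units lies in \(U_i\). From \(U_1\cup U_2=\G^{(0)}\) we get \(\G^{(n)}=\G|_{U_1}^{(n)}\cup\G|_{U_2}^{(n)}\), and also \(\G|_{U_1}^{(n)}\cap\G|_{U_2}^{(n)}=\G|_{U_0}^{(n)}\). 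This is where invariance is used, and degree \(0\) is included.

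\emph{Maps and exactness in degree \(n\).} For an open subset \(V\) of a locally compact Hausdorff space \(X\), extension by zero \(\iota\colon C_c(V,A)\to C_c(X,A)\) is a well-defined injective homomorphism. Continuity at the boundary of \(V\) holds because the support is a compact subset of \(V\), hence closed in \(X\). Define \(\alpha(f)=(\iota f,-\iota f)\) and \(\beta(g_1,g_2)=\iota g_1+\iota g_2\).
\begin{itemize}
\item[(a)] \(\alpha\) is injective and \(\beta\circ\alpha=0\). Both are immediate.
\item[(b)] \(\ker\beta\subseteq\operatorname{im}\alpha\). If \(\iota g_1=-\iota g_2\), then \(g_1\) vanishes off \(\G|_{U_0}^{(n)}\). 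Its support is a compact subset of \(\G|_{U_1}^{(n)}\) that is contained in \(\G|_{U_2}^{(n)}\), so it is a compact subset of the intersection. Hence \(g_1\) is the extension of some \(f\in C_c(\G|_{U_0}^{(n)},A)\), and \(g_2=-\iota f\).
\item[(c)] \(\beta\) is surjective. This is the main obstacle, a partition of unity argument with \(A\)-valued functions. Let \(h\in C_c(\G^{(n)},A)\) have support \(K\). The nerve has a basis of compact open sets (it is ample), so we can cover \(K\) by finitely many compact open sets, each contained in \(\G|_{U_1}^{(n)}\) or in \(\G|_{U_2}^{(n)}\). Let \(V_1\) be the union of those of the first kind and \(V_2\) the union of the rest with \(V_1\) removed. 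These are disjoint compact open sets with \(V_i\subseteq\G|_{U_i}^{(n)}\) and \(K\subseteq V_1\cup V_2\). Then \(h=h\chi_{V_1}+h\chi_{V_2}\), and each summand is continuous because \(\chi_{V_i}\) is locally constant. No topology on \(A\) beyond being a topological group is needed, which is why the theorem holds for every coefficient group. This step is what replaces the absence of continuous partitions of unity with values in \(A\).
\end{itemize}

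\emph{Chain maps.} It remains to check that \(\alpha_\bullet\) and \(\beta_\bullet\) commute with the boundary. The face maps \(d_i\) of \(\G\) restrict to the face maps of \(\G|_{U_i}\), because the reduction is a subgroupoid closed under the simplicial operations. Moreover, invariance gives \(d_i^{-1}(\G|_{U_i}^{(n-1)})=\G|_{U_i}^{(n)}\). So for \(f\) supported in the reduction, the pushforward \((d_i)_*\iota f\) equals \(\iota (d_i)_*f\), as the fibres of \(d_i\) over points of \(\G|_{U_i}^{(n-1)}\) lie entirely in \(\G|_{U_i}^{(n)}\) and are empty contributions elsewhere. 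Hence extension by zero is a chain map. The long exact sequence, with \(\partial_n\) the connecting homomorphism, then follows from the snake lemma.
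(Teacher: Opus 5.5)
Your proposal is correct and follows the paper's proof essentially step for step. The paper proves Proposition~\ref{prop:ses} by the same four checks: injectivity of \(\alpha_n\) with \(\beta_n\alpha_n=0\), the support argument showing the kernel of \(\beta_n\) lies in the image of \(\alpha_n\), surjectivity of \(\beta_n\) via the disjoint compact open sets of Lemma~\ref{lem:split}, and compatibility of extension by zero with pushforward along the face maps. It then obtains Theorem~\ref{thm:C} from the long exact homology sequence; the one fact you assert without proof, that each \(\G^{(n)}\) has a basis of compact open sets, is the paper's Lemma~\ref{lem:basis}.
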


Theorem~\ref{thm:C} is proved in \S\ref{sec:covers} together with Theorem~\ref{thm:B}, and it holds for every topological abelian coefficient group, which is the point at which the Moore complex is more than a repackaging of the discrete theory. We should say at once that for a discrete coefficient group Theorem~\ref{thm:C} can also be obtained by applying the Barratt--Whitehead argument to two instances of the long exact sequence attached to a closed invariant subset of the unit space and its open complement \cite[Proposition~4.2]{MillerSteinberg2024}. The chain level proof given here is shorter, and it produces the connecting map explicitly at the level of the chain complexes and for topological coefficients.

The point of Theorem~\ref{thm:C} is that its connecting maps need not vanish, and our last result exhibits this behaviour in the smallest example that we have been able to find.

\begin{thm}
\label{thm:D}
Let \(X\) be the two point compactification of the integers, let \(T\colon X\to X\) be the shift, and let \(\G\) be the transformation groupoid of the action of the integers on \(X\) generated by \(T\). Let \(U_1\) and \(U_2\) be the complements in \(X\) of the two points at infinity. Then \(U_1\) and \(U_2\) are open and invariant, neither of them is closed, \(U_1\cup U_2=X\), and with integer coefficients one has
\[
H_1(\G|_{U_1})=H_1(\G|_{U_2})=H_1(\G|_{U_1\cap U_2})=0,
\qquad
H_1(\G)\cong\Z .
\]
The connecting homomorphism \(\partial_1\colon H_1(\G)\to H_0(\G|_{U_1\cap U_2})\) is an isomorphism of infinite cyclic groups, so that the sequence does not split into short exact sequences.
\end{thm}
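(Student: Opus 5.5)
Let \(X=\Z\cup\{-\infty,+\infty\}\) and \(T(x)=x+1\), so that \(T\) fixes both points at infinity. We take \(U_1=X\setminus\{+\infty\}\), \(U_2=X\setminus\{-\infty\}\) and \(U_0=U_1\cap U_2=\Z\). The topological claims are immediate: each point at infinity is a fixed point, so the set of finite points and both \(U_i\) are unions of orbits, hence invariant. Each \(U_i\) is open because its complement is a single point, which is closed. Neither \(U_i\) is closed, since the missing point at infinity is a limit of integers lying in \(U_i\). Their union is \(X\).

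For the homology computations I would use the standard identification of the homology of a transformation groupoid \(\Z\ltimes Y\) of an ample space \(Y\) with the group homology of \(\Z\) with coefficients in \(C_c(Y,\Z)\). This gives \(H_0=\) coinvariants \(C_c(Y,\Z)/(1-T_*)\), and \(H_1=\) invariants \(\ker(1-T_*)\), with \(H_n=0\) for \(n\geq2\). If this identification is not stated earlier in the paper, I would either cite Matui or derive it from the explicit Moore complex with the usual homotopy to the two-term complex. We need it for \(Y=\Z\), for \(Y=U_1\), for \(Y=U_2\) and for \(Y=X\); each reduction \(\G|_{U}\) is again the transformation groupoid of the restricted action.

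\begin{itemize}
\item \(Y=\Z\): here \(C_c(\Z,\Z)=\Z[\Z]\) is the free module. Its invariants are zero and its coinvariants are \(\Z\) via the sum map, so \(H_1(\G|_{U_0})=0\) and \(H_0(\G|_{U_0})\cong\Z\).
\item \(Y=U_1\): a compactly supported locally constant \(f\) is eventually constant as \(n\to-\infty\), with value \(f(-\infty)\), and vanishes for large \(n\). If \(f\) is invariant, then it is constant along \(\Z\), and this constant must be \(0\) because of compact support on the positive side; by continuity \(f(-\infty)=0\) as well. Hence \(H_1(\G|_{U_1})=0\), and symmetrically \(H_1(\G|_{U_2})=0\).
\item \(Y=X\): the invariant functions are exactly the constants, which are compactly supported because \(X\) is compact. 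Hence \(H_1(\G)\cong\Z\), generated by the class of \(\chi_X\).
\end{itemize}

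For the connecting map I would use exactness rather than a chain-level computation. The relevant part of the Mayer–Vietoris sequence of Theorem C is
\[
0=H_1(\G|_{U_1})\oplus H_1(\G|_{U_2})\to H_1(\G)\xto{\partial_1}H_0(\G|_{U_0})\xto{H_0(\alpha)}H_0(\G|_{U_1})\oplus H_0(\G|_{U_2}).
\]
Exactness at \(H_1(\G)\) shows that \(\partial_1\) is injective. For surjectivity it suffices to show that \(H_0(\alpha)=0\). The group \(H_0(\G|_{U_0})\) is generated by the class of \(\chi_{\{0\}}\). Its image in \(H_0(\G|_{U_1})\) is the class of the same function, and
\[
\chi_{\{0\}}=(1-T_*)\,\chi_{\{n\in\Z:\,n\le -1\}\cup\{-\infty\}},
\]
where the set \(\{n\le -1\}\cup\{-\infty\}\) is compact open in \(U_1\), with the sign depending on the convention for \(T_*\). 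So the class vanishes in \(H_0(\G|_{U_1})\), and the symmetric argument with \(\{n\ge1\}\cup\{+\infty\}\) gives vanishing in \(H_0(\G|_{U_2})\). Therefore \(\partial_1\) is an isomorphism \(\Z\to\Z\).

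Finally, the sequence does not split into short exact sequences, because the connecting homomorphism \(\partial_1\) is nonzero.

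The main obstacle is justifying the reduction of Matui's Moore complex to the two-term complex for \(\Z\)-actions on the non-compact spaces \(U_i\), together with keeping track of which compact open sets are legitimate in \(U_i\). The sets \(\{n\le-1\}\cup\{-\infty\}\) used above are compact open in \(U_1\) precisely because \(+\infty\) has been removed.
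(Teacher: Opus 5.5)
Your proposal is correct and follows essentially the paper's proof. You identify the homology of each of the four transformation groupoids with the homology of \(\Z\) acting on \(C_c(Y,\Z)\), as in Lemma~\ref{lem:transformation}, and read off \(H_1\) as the invariants. You then get \(\partial_1\) injective from \(H_1(\G|_{U_1})\oplus H_1(\G|_{U_2})=0\), and surjective from \(H_0(\alpha)=0\) by writing \(\chi_{\{0\}}\) as a boundary in each piece. The differences are cosmetic: your \(U_1,U_2\) are the paper's \(U_2,U_1\), and you compute the invariants directly rather than through the paper's free \(R\)-module bases. Also, whichever convention for \(T_*\) you fix, one of your two boundary identities produces the translate \(\chi_{\{1\}}\) or \(\chi_{\{-1\}}\) rather than \(\pm\chi_{\{0\}}\); this is harmless, since translates have the same class in \(H_0\).
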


Every piece of the cover has trivial first homology, the groupoid does not, and the Mayer--Vietoris sequence accounts for the whole of the difference. By Theorem~\ref{thm:B} no clopen invariant cover of any ample groupoid can ever behave in this way.

This paper is a writeup of part of the author's master's thesis \cite{MelodiaThesis}, and we state the relation to it in full. The thesis proves the universal coefficient theorem in the form of Corollary~\ref{cor:uct} and the Mayer--Vietoris sequence for clopen invariant covers, and it isolates the obstruction to the tensor comparison map for non-discrete coefficients. The universal coefficient theorem is due to Miller and Steinberg \cite[Theorem~4.1]{MillerSteinberg2024}, whose preprint predates the thesis by more than a year. The author obtained it independently and in ignorance of their work, and priority is theirs. The present paper cites their theorem rather than reproving it, and \S\ref{sec:coefficients} records how the two arguments differ. The Mayer--Vietoris sequence for clopen invariant covers is the statement that Theorem~\ref{thm:B} shows to be degenerate. What is new here is Theorem~\ref{thm:A}, which turns the obstruction into a criterion, together with Theorems~\ref{thm:B}, \ref{thm:C} and \ref{thm:D} as stated above.

The paper is organised as follows. In \S\ref{sec:complex} we fix the Moore complex and the reductions that we use, and record the two topological facts about compact open sets on which the later proofs rest. In \S\ref{sec:coefficients} we prove Theorem~\ref{thm:A}, deduce the universal coefficient theorem and delimit its scope by two examples. In \S\ref{sec:covers} we prove Theorems~\ref{thm:B} and \ref{thm:C}, and in \S\ref{sec:example} we prove Theorem~\ref{thm:D} by computing the homology of each of the four transformation groupoids that occur in its statement.

\section{The Moore Complex}
\label{sec:complex}

We first fix the objects and the notation for the whole paper. For background on \'etale groupoids and their bisections we refer to the lecture notes of Sims \cite[Ch.~2]{Sims2020}.

\begin{nota}
\label{nota:setup}
A topological groupoid \(\G\) has unit space \(\G^{(0)}\), range map \(r\) and source map \(s\). Arrows are written \(\gamma,\eta\) and units are written \(x,y,z\). We call \(\G\) \emph{\'etale} if \(s\) is a local homeomorphism, and \emph{ample} if in addition \(\G\) is locally compact and Hausdorff and \(\G^{(0)}\) has a basis of compact open sets. For \(n\geq 1\) we write
\[
\G^{(n)}\defeq\{(\gamma_1,\dots,\gamma_n)\in\G^n : s(\gamma_i)=r(\gamma_{i+1})\ \text{for}\ 1\leq i<n\}
\]
for the space of composable \(n\)-tuples, with the subspace topology from \(\G^n\). The face maps \(d_i\colon\G^{(n)}\to\G^{(n-1)}\) are given for \(n=1\) by \(d_0=s\) and \(d_1=r\), and for \(n\geq 2\) by
\[
d_i(\gamma_1,\dots,\gamma_n)=
\begin{cases}
(\gamma_2,\dots,\gamma_n), & i=0,\\
(\gamma_1,\dots,\gamma_i\gamma_{i+1},\dots,\gamma_n), & 1\leq i\leq n-1,\\
(\gamma_1,\dots,\gamma_{n-1}), & i=n.
\end{cases}
\]
For a locally compact Hausdorff space \(X\) and an abelian topological group \(A\), the set \(C_c(X,A)\) of continuous maps \(X\to A\) with compact support is an abelian group under pointwise addition, and \(\chi_V\in C_c(X,\Z)\) denotes the characteristic function of a compact open set \(V\subseteq X\). If \(\phi\colon X\to Y\) is a local homeomorphism between locally compact Hausdorff spaces, the pushforward
\(
\phi_*(f)(y)\defeq\sum_{\phi(x)=y}f(x)
\)
defines a homomorphism \(\phi_*\colon C_c(X,A)\to C_c(Y,A)\), and pushforward is functorial \cite[\S3.1]{Matui2012}. If \(\G\) is \'etale then each \(d_i\) is a local homeomorphism, and
\[
\delta_n\defeq\sum_{i=0}^{n}(-1)^i(d_i)_*\colon C_c(\G^{(n)},A)\to C_c(\G^{(n-1)},A),
\qquad n\geq1,
\]
together with \(\delta_0\defeq0\) makes \(C_\bullet(\G;A)\defeq(C_c(\G^{(n)},A),\delta_n)_{n\geq0}\) a chain complex, the \emph{Moore complex} of \(\G\) with coefficients in \(A\). We write \(H_n(\G;A)\) for its homology and \(H_n(\G)\) for \(H_n(\G;\Z)\). This is Definition~3.1 of \cite{Matui2012}, whose conventions we follow throughout. In degree one the boundary is the difference \(s_*-r_*\) of the two pushforwards along the source map and the range map, which are the face maps in that degree.
\end{nota}

Two elementary facts about compact open sets are used repeatedly, and we isolate them here rather than repeat them in each proof. Recall that a compact subset of a Hausdorff space is closed, so that a compact open subset of a Hausdorff space is clopen. Consequently the difference of two compact open subsets is again a compact open subset, and both of these facts are used below without further comment.

\begin{lemma}
\label{lem:basis}
Let \(\G\) be an ample groupoid and let \(n\geq0\). Then the space \(\G^{(n)}\) of composable \(n\)-tuples is locally compact and Hausdorff, and it has a basis of compact open sets. In particular each of its compact open subsets is clopen in it.
\end{lemma}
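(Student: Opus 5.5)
The plan is to treat \(n=0\) and \(n\geq1\) separately, and to get the basis of compact open sets on \(\G^{(n)}\) from products of compact open bisections.

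For \(n=0\) we have \(\G^{(0)}\), which is locally compact Hausdorff with a basis of compact open sets by the definition of ample (Notation~\ref{nota:setup}). It is also open in \(\G\), since \(\G\) is \'etale, and closed, since \(\G\) is Hausdorff.

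For \(n\geq1\) the first step is that \(\G\) itself has a basis of compact open bisections. Take \(\gamma\in\G\) and an open neighbourhood \(O\) of \(\gamma\). Because \(s\) is a local homeomorphism, we can shrink \(O\) to an open set \(B\ni\gamma\) on which \(s\) restricts to a homeomorphism onto an open subset \(s(B)\) of \(\G^{(0)}\). Since \(\G^{(0)}\) has a basis of compact open sets, choose a compact open \(K\) with \(s(\gamma)\in K\subseteq s(B)\). Then \((s|_B)^{-1}(K)\) is an open subset of \(\G\), it is homeomorphic to \(K\) and hence compact, and it contains \(\gamma\) and lies inside \(O\).

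The second step handles the product. The space \(\G^n\) is locally compact Hausdorff, and products \(V_1\times\cdots\times V_n\) of compact open sets form a basis for it. The subset \(\G^{(n)}\) is cut out by the equations \(s(\gamma_i)=r(\gamma_{i+1})\), and since \(\G^{(0)}\) is Hausdorff and \(s,r\) are continuous, \(\G^{(n)}\) is closed in \(\G^n\). A closed subspace of a locally compact Hausdorff space is again locally compact Hausdorff. The sets \((V_1\times\cdots\times V_n)\cap\G^{(n)}\) form a basis of \(\G^{(n)}\) and are open there. Each is also compact, being a closed subset of the compact set \(V_1\times\cdots\times V_n\). This gives the basis of compact open sets.

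The final clause follows from the remark before the lemma: compact subsets of Hausdorff spaces are closed, so every compact open subset of \(\G^{(n)}\) is clopen. No step is a real obstacle. The only point needing care is closedness of \(\G^{(n)}\), and this relies on Hausdorffness of the unit space rather than of \(\G\). Without it the intersections with basic product sets would be open but might fail to be compact.
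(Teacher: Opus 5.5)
Your proposal is correct and follows essentially the same route as the paper: you first obtain a basis of compact open sets of \(\G\) by pulling back compact open subsets of \(s(B)\) along the local homeomorphism \(s|_B\), and you then intersect products of such sets with \(\G^{(n)}\), which is closed in \(\G^n\). The only difference is cosmetic: you absorb \(n=1\) into the product case, while the paper treats \(n=1\) separately and uses products only for \(n\geq2\).
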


\begin{proof}~
\begin{enumerate}
\item\label{st:basis-one}
\textbf{The cases \(n=0\) and \(n=1\).} For \(n=0\) the claim holds by hypothesis. Let \(\gamma\in\G\) and let \(V\subseteq\G\) be open with \(\gamma\in V\). Since \(\G\) is \'etale there is an open set \(B\) with \(\gamma\in B\subseteq V\) such that \(s|_B\colon B\to s(B)\) is a homeomorphism onto an open subset of \(\G^{(0)}\). As \(\G^{(0)}\) has a basis of compact open sets, choose a compact open set \(K\subseteq s(B)\) with \(s(\gamma)\in K\). Then \((s|_B)^{-1}(K)\) is compact, is open in \(B\) and hence in \(\G\), and satisfies \(\gamma\in(s|_B)^{-1}(K)\subseteq V\), so that the compact open sets form a basis at \(\gamma\), as claimed for \(n=1\).

\item\label{st:basis-two}
\textbf{The case \(n\geq2\).} The maps \(s\) and \(r\) are continuous and \(\G\) is Hausdorff, so each condition \(s(\gamma_i)=r(\gamma_{i+1})\) is closed and \(\G^{(n)}\) is a closed subspace of \(\G^n\). By step~\ref{st:basis-one} the product \(\G^n\) is locally compact Hausdorff with a basis of compact open sets, namely the products of such sets. A closed subspace of a locally compact Hausdorff space is locally compact Hausdorff, and the intersection of a compact open set with a closed subspace is compact and open in that subspace. Hence \(\G^{(n)}\) has a basis of compact open sets, and each of them is clopen in \(\G^{(n)}\) because that space is Hausdorff.\qedhere
\end{enumerate}
\end{proof}

\begin{lemma}
\label{lem:split}
Let \(X\) be a locally compact Hausdorff space with a basis of compact open sets, let \(V_1,V_2\subseteq X\) be open, and let \(K\subseteq V_1\cup V_2\) be compact. Then there are disjoint compact open sets \(K_1,K_2\subseteq X\) with \(K_i\subseteq V_i\) for \(i=1,2\) and with \(K\subseteq K_1\cup K_2\).
\end{lemma}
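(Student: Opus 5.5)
The plan is a standard compactness argument. We first cover \(K\) by compact open sets lying in \(V_1\) or \(V_2\), and then make them disjoint by taking differences.

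\textbf{Step 1: a finite cover.} For each point \(x\in K\), choose an index \(i(x)\in\{1,2\}\) with \(x\in V_{i(x)}\). Since \(X\) has a basis of compact open sets and \(V_{i(x)}\) is open, there is a compact open set \(B_x\) with \(x\in B_x\subseteq V_{i(x)}\). The sets \(B_x\) form an open cover of the compact set \(K\), so finitely many of them suffice, say \(B_{x_1},\dots,B_{x_m}\). Sort these by index and put
\[
L_1\defeq\bigcup_{i(x_j)=1}B_{x_j},\qquad L_2\defeq\bigcup_{i(x_j)=2}B_{x_j}.
\]
Each \(L_i\) is a finite union of compact open sets, hence compact and open, and \(L_i\subseteq V_i\). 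Moreover \(K\subseteq L_1\cup L_2\). An empty union is allowed and gives the empty set.

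\textbf{Step 2: disjointification.} Put \(K_1\defeq L_1\) and \(K_2\defeq L_2\setminus L_1\). Since \(X\) is Hausdorff, the compact set \(L_1\) is closed, so \(K_2=L_2\cap(X\setminus L_1)\) is open. It is also a closed subset of the compact set \(L_2\), hence compact. This is the remark before Lemma~\ref{lem:basis} that differences of compact open sets are compact open. The two sets are disjoint by construction, and \(K_i\subseteq L_i\subseteq V_i\). Finally \(K_1\cup K_2=L_1\cup L_2\supseteq K\).

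There is no genuine obstacle. The only point needing care is Step 2: closedness of compact sets in Hausdorff spaces is exactly what keeps \(K_2\) open. Without it, the difference of two compact open sets need not be open.
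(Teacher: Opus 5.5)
Your proposal is correct and follows the paper's proof essentially step for step. The paper also takes a finite subcover by compact open sets $B_{x_j}\subseteq V_{i(x_j)}$, groups them by index, and disjointifies by setting $K_2$ to be the second union minus $K_1$, using that compact open sets are clopen in a Hausdorff space.
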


\begin{proof}
Each \(x\in K\) lies in \(V_{i(x)}\) for some \(i(x)\in\{1,2\}\), so there is a compact open set \(B_x\) with \(x\in B_x\subseteq V_{i(x)}\). Finitely many of these cover \(K\), say \(B_{x_1},\dots,B_{x_p}\). Let \(K_1\) be the union of those \(B_{x_j}\) with \(i(x_j)=1\) and let \(K'\) be the union of the remaining ones, so that \(K_1\subseteq V_1\) and \(K'\subseteq V_2\) are compact open and \(K\subseteq K_1\cup K'\). Put \(K_2\defeq K'\setminus K_1\), which is compact open because \(K_1\) is clopen. Then \(K_2\subseteq V_2\), the two sets \(K_1\) and \(K_2\) are disjoint, and \(K\subseteq K_1\cup K'=K_1\cup K_2\), which is what the lemma asserts.
\end{proof}

Finally we fix the reductions that will be cut out of a groupoid. The \emph{orbit} of a unit \(x\) is the set \(r(s^{-1}(x))\) of the ranges of all arrows of \(\G\) with source \(x\).

\begin{defn}
\label{def:invariant}
Let \(\G\) be a topological groupoid. A subset \(U\subseteq\G^{(0)}\) is \emph{invariant} if \(r(\gamma)\in U\) for every \(\gamma\in\G\) with \(s(\gamma)\in U\). Equivalently, the set \(U\) is a union of orbits. The \emph{reduction} of \(\G\) to \(U\) is the subgroupoid \(\G|_U\defeq r^{-1}(U)\cap s^{-1}(U)\), which carries the subspace topology inherited from \(\G\) and has \(U\) as its own unit space.
\end{defn}

Invariant subsets are called saturated by some authors. We use invariant, which is the more common term in the groupoid literature and avoids a clash with the several other uses that the word saturated already has in topology and in algebra.

\begin{lemma}
\label{lem:reduction-ample}
Let \(\G\) be an ample groupoid and let \(U\subseteq\G^{(0)}\) be open, but neither necessarily closed nor necessarily invariant. Then the reduction \(\G|_U\) is again an ample groupoid, and its unit space is \(U\) with the topology inherited from \(\G^{(0)}\).
\end{lemma}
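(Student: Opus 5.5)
The plan is to check the four ingredients of ampleness for \(\G|_U\) one at a time: that it is a topological groupoid with unit space \(U\), that it is locally compact Hausdorff, that it is étale, and that its unit space has a basis of compact open sets. The decisive observation is that \(\G|_U=r^{-1}(U)\cap s^{-1}(U)\) is an \emph{open} subset of \(\G\), because \(r\) and \(s\) are continuous and \(U\) is open. Almost everything else is inherited from \(\G\) through this openness.

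\textbf{Groupoid structure.} Since \(U\subseteq\G^{(0)}\), composable pairs and inverses of arrows with range and source in \(U\) again have range and source in \(U\). The units of \(\G|_U\) are exactly \(U\), because a unit \(x\) satisfies \(r(x)=s(x)=x\). The operations are restrictions of continuous maps, hence continuous for the subspace topology. Invariance is not needed at any point. The unit space \(U\) carries the topology inherited from \(\G\), which coincides with the one inherited from \(\G^{(0)}\) because \(\G^{(0)}\) itself has the subspace topology.

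\textbf{Local compactness and Hausdorffness.} An open subspace of a locally compact Hausdorff space is locally compact Hausdorff. This applies to \(\G|_U\subseteq\G\) and to \(U\subseteq\G^{(0)}\).

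\textbf{Étaleness.} Let \(\gamma\in\G|_U\), and take an open \(B\ni\gamma\) in \(\G\) on which \(s\) is a homeomorphism onto an open set. Then \(B'\defeq B\cap\G|_U\) is open in \(\G|_U\), and \(s|_{B'}\) is a homeomorphism onto \(s(B')\). We need \(s(B')\) to be open in \(U\). It is the image of an open subset of \(B\) under the homeomorphism \(s|_B\), so it is open in \(s(B)\), hence open in \(\G^{(0)}\), and it is contained in \(U\). Thus \(s\) is a local homeomorphism on \(\G|_U\).

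\textbf{Basis of compact open sets.} Let \(x\in V\subseteq U\) with \(V\) open in \(U\). Then \(V\) is open in \(\G^{(0)}\), so there is a compact open \(K\) with \(x\in K\subseteq V\). This \(K\) is compact and open in \(U\).

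None of these steps is a real obstacle. The only point needing care is the étaleness step, namely that the image of \(B'\) is open in the subspace \(U\). That is handled, as above, by noting that \(B'\) is open in \(B\) and that \(s|_B\) is a homeomorphism onto an open set.
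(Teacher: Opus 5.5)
Your proof is correct and follows the paper's route. Both arguments use the same steps: \(\G|_U=r^{-1}(U)\cap s^{-1}(U)\) is open in \(\G\), open subspaces of locally compact Hausdorff spaces inherit both properties, \(s\) restricted to an open set remains a local homeomorphism, and the compact open basis of \(\G^{(0)}\) restricts to \(U\). You additionally spell out the groupoid axioms and why \(s(B')\) is open in \(U\), which the paper leaves implicit.
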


\begin{proof}
The sets \(r^{-1}(U)\) and \(s^{-1}(U)\) are open, so \(\G|_U\) is an open subgroupoid of \(\G\) and inherits the Hausdorff property. An open subspace of a locally compact Hausdorff space is locally compact, and the members of a basis of compact open sets that are contained in the subspace form such a basis for it. This applies to \(\G|_U\) by Lemma~\ref{lem:basis} and to \(U\subseteq\G^{(0)}\) by hypothesis. Restricting a local homeomorphism to an open subset gives a local homeomorphism, so that \(s|_{\G|_U}\) is one and \(\G|_U\) is \'etale as well.
\end{proof}

\section{Coefficients}
\label{sec:coefficients}

This section proves Theorem~\ref{thm:A}, records the universal coefficient theorem that follows from it, and shows by two examples that the condition in Theorem~\ref{thm:A} is strictly weaker than discreteness of \(A\) and that it does fail in practice.

We first record the finite disjoint refinement of a family of integer valued chains, on which both halves of the proof of that theorem are going to rest.

\begin{lemma}
\label{lem:refine}
Let \(X\) be a locally compact Hausdorff space with a basis of compact open sets and let \(f_1,\dots,f_m\in C_c(X,\Z)\). Then there are finitely many pairwise disjoint compact open sets \(V_1,\dots,V_r\subseteq X\) such that every one of the \(f_j\) is constant on each \(V_k\) and vanishes outside the union \(V_1\cup\dots\cup V_r\) of these finitely many sets.
\end{lemma}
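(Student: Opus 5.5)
Each \(f_j\) is continuous into the discrete group \(\Z\) with compact support, so the plan is to show it is a finite integer combination of characteristic functions of compact open sets and then take the common refinement of all these sets.

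\textbf{Step 1: each \(f_j\) is locally constant with finite image and compact open support.} Since \(\Z\) is discrete, each fibre \(f_j^{-1}(c)\) is open, so \(f_j\) is locally constant. The support \(\supp f_j\) is compact and covered by the open fibres \(f_j^{-1}(c)\). Finitely many fibres therefore suffice, and \(f_j\) takes only finitely many nonzero values \(c_{j,1},\dots,c_{j,p_j}\). For each nonzero value \(c\), the set \(f_j^{-1}(c)\) is open. It is also closed by continuity, and it lies inside the compact support, so it is compact. Hence each \(E_{j,l}\defeq f_j^{-1}(c_{j,l})\) with \(c_{j,l}\neq0\) is compact open, and \(f_j\) vanishes outside their union.

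\textbf{Step 2: common refinement.} Let \(\mathcal E\) be the finite family of all the sets \(E_{j,l}\), over all \(j\) and all nonzero values. For each nonempty subfamily \(S\subseteq\mathcal E\), put \(V_S\defeq\bigcap_{E\in S}E\setminus\bigcup_{E\in\mathcal E\setminus S}E\). Each \(V_S\) is a finite intersection of compact open sets, minus a finite union of compact open sets. Compact open sets are clopen, as recalled before Lemma~\ref{lem:basis}, so \(V_S\) is open and is a closed subset of a compact set, hence compact open. Distinct \(S\) give disjoint \(V_S\), and together they cover \(\bigcup\mathcal E\). Discarding the empty ones and enumerating the rest as \(V_1,\dots,V_r\) gives the required family. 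Each \(f_j\) is constant on each \(V_S\): for fixed \(j\) the sets \(E_{j,l}\) are pairwise disjoint, so \(V_S\) lies in at most one of them. If it lies in one, \(f_j\) takes the corresponding value on \(V_S\); otherwise \(f_j\) is \(0\) there. Each \(f_j\) also vanishes outside \(\bigcup_k V_k=\bigcup\mathcal E\).

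There is no real obstacle here. The only point needing care is compactness of the fibres, which uses the Hausdorff property through the clopenness of compact open sets. The basis of compact open sets is not actually needed for this lemma.
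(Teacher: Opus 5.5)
Your proof is correct, but it takes a different route from the paper's.

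The paper argues by covering. Local constancy gives each point an open neighbourhood on which all the \(f_j\) are constant, and the basis of compact open sets shrinks it to a compact open \(B_x\). Compactness of \(K=\bigcup_j\supp(f_j)\) then gives a finite subcover, and successive differences \(B_{x_k}\setminus(B_{x_1}\cup\dots\cup B_{x_{k-1}})\) make it disjoint. This is the same cover-and-disjointify pattern as in Lemma~\ref{lem:split}.

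You instead notice that the nonzero level sets are already compact open, and you take the atoms of the finite Boolean algebra they generate. Your sets \(V_S\) are precisely the nonempty fibres of \(F=(f_1,\dots,f_m)\colon X\to\Z^m\) over nonzero vectors. So your Step~1, applied once to \(F\) (continuous into a discrete group, with support \(\bigcup_j\supp(f_j)\)), already gives the whole lemma.

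Your route buys two things.
\begin{itemize}
\item The partition is canonical: it is the coarsest partition of the set where some \(f_j\) is nonzero into pieces on which every \(f_j\) is constant.
\item As you say, the basis of compact open sets is never used. Hausdorffness is not needed either. Your sets \(E_{j,l}\) are closed as fibres of a continuous map into a discrete group, as you note yourself in Step~1. So the appeal to ``compact open implies clopen'' in Step~2 is redundant, and the statement survives with no hypothesis on \(X\) at all.
\end{itemize}

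The paper's argument costs nothing in its setting, since the basis is a standing hypothesis. It also reuses the technique that Lemma~\ref{lem:split} genuinely needs, because there the open sets being separated are not level sets of any function.
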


\begin{proof}
Since \(\Z\) is discrete, each \(f_j\) is locally constant, so every \(x\in X\) has an open neighbourhood on which all of \(f_1,\dots,f_m\) are constant. Shrinking that neighbourhood, we obtain a compact open set \(B_x\) with the same property. The set \(K\defeq\bigcup_{j}\supp(f_j)\) is compact, so finitely many such sets cover it, say \(B_{x_1},\dots,B_{x_p}\). Put \(V_1\defeq B_{x_1}\) and \(V_k\defeq B_{x_k}\setminus(B_{x_1}\cup\dots\cup B_{x_{k-1}})\) for \(2\leq k\leq p\). Each \(V_k\) is compact open, the \(V_k\) are pairwise disjoint, their union is \(B_{x_1}\cup\dots\cup B_{x_p}\supseteq K\), and each \(f_j\) is constant on \(V_k\subseteq B_{x_k}\). Each \(f_j\) vanishes off \(K\) and hence off the union of the sets \(V_k\), because that union contains the compact set \(K\), and this proves the lemma as stated.
\end{proof}

\begin{proof}[Proof of Theorem~\ref{thm:A}]~
\begin{enumerate}
\item\label{st:inj}
\textbf{The map \(\Phi_X\) is injective.} Let \(\nu=\sum_{j=1}^{m}f_j\otimes a_j\) lie in the kernel of \(\Phi_X\). Choose \(V_1,\dots,V_r\) for \(f_1,\dots,f_m\) as in Lemma~\ref{lem:refine} and discard the empty ones. Writing \(f_j=\sum_{k=1}^{r}n_{jk}\chi_{V_k}\) with \(n_{jk}\in\Z\) and regrouping the resulting double sum gives
\[
\nu=\sum_{k=1}^{r}\chi_{V_k}\otimes b_k,
\qquad
b_k\defeq\sum_{j=1}^{m}n_{jk}a_j\in A,
\qquad
\Phi_X(\nu)=\sum_{k=1}^{r}b_k\,\chi_{V_k}.
\]
Evaluating the identity \(\Phi_X(\nu)=0\) at a point of the nonempty set \(V_k\) yields \(b_k=0\) for each index \(k\), and therefore \(\nu=0\) in the tensor product \(C_c(X,\Z)\otimes_{\Z}A\).

\item\label{st:i-ii}
\textbf{Condition \textup{(i)} implies condition \textup{(ii)}.} Let \(\xi\in C_c(X,A)\) be locally constant. The fibres of \(\xi\) are open and cover the compact set \(\supp(\xi)\), so the set \(F\defeq\xi(X)\setminus\{0\}\) of nonzero values is finite. For \(a\in F\) the fibre \(U_a\defeq\xi^{-1}(a)\) is open, and its complement is the union of the remaining fibres and hence open as well, so \(U_a\) is clopen. Being a closed subset of \(\supp(\xi)\), it is compact. The sets \(U_a\) for \(a\in F\) are pairwise disjoint compact open sets, and \(\xi=\sum_{a\in F}a\,\chi_{U_a}\), which is the image under \(\Phi_X\) of the element \(\sum_{a\in F}\chi_{U_a}\otimes a\) of the tensor product \(C_c(X,\Z)\otimes_{\Z}A\), so that \(\Phi_X\) is surjective.

\item\label{st:ii-i}
\textbf{Condition \textup{(ii)} implies condition \textup{(i)}.} Let \(\xi\in C_c(X,A)\) and write \(\xi=\Phi_X(\sum_{j=1}^{m}f_j\otimes a_j)\). Each \(f_j\) is locally constant because \(\Z\) is discrete, so every point of \(X\) has a neighbourhood \(W\) on which all the \(f_j\) are constant, and on \(W\) the map \(\xi\) takes the constant value \(\sum_j f_j(x)a_j\) computed at any point \(x\in W\), so that \(\xi\) is locally constant.

\item\label{st:equiv}
\textbf{The equivalence.} Condition \textup{(iii)} implies condition \textup{(ii)} trivially, and condition \textup{(ii)} implies condition \textup{(iii)} by step~\ref{st:inj}. Together with steps~\ref{st:i-ii} and \ref{st:ii-i} this closes the cycle of implications and makes the three conditions equivalent to one another.\qedhere
\end{enumerate}
\end{proof}

If \(A\) is discrete then every continuous map into \(A\) is locally constant, so Theorem~\ref{thm:A} applies. Combining this with Lemma~\ref{lem:basis} identifies the Moore complex with the integral one tensored by \(A\), and the universal coefficient theorem for complexes of free abelian groups \cite[Theorem~3A.3]{Hatcher2002} then gives the following statement, which is Theorem~4.1 of \cite{MillerSteinberg2024} in the formulation used there. We record it here because it is the statement whose scope the rest of the present section is going to delimit.

\begin{cor}
\label{cor:uct}
Let \(\G\) be an ample groupoid and let \(A\) be a discrete abelian group. Then for every degree \(n\geq0\) there is a short exact sequence
\[
0\longrightarrow H_n(\G)\otimes_{\Z}A
\xto{\kappa_n} H_n(\G;A)
\xto{\iota_n} \Tor_1^{\Z}\bigl(H_{n-1}(\G),A\bigr)\longrightarrow 0 ,
\]
in which \(\kappa_n\) and \(\iota_n\) are the two maps of the algebraic universal coefficient theorem for the complex \(C_\bullet(\G;\Z)\), transported along the isomorphism of Theorem~\ref{thm:A}, and where the group \(H_{-1}(\G)\) is understood to be zero, so that for \(n=0\) the map \(\kappa_0\) is an isomorphism onto \(H_0(\G;A)\) and the sequence carries no torsion term at all.
\end{cor}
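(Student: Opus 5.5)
The plan is to reduce Corollary~\ref{cor:uct} to the algebraic universal coefficient theorem, applied to the integral Moore complex \(C_\bullet(\G;\Z)\), by identifying \(C_\bullet(\G;A)\) with \(C_\bullet(\G;\Z)\otimes_{\Z}A\) as chain complexes.

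\textbf{Degreewise identification.} Fix \(n\geq0\). By Lemma~\ref{lem:basis}, \(X=\G^{(n)}\) is locally compact Hausdorff with a basis of compact open sets. Since \(A\) is discrete, every continuous map \(X\to A\) is locally constant, so condition~(i) of Theorem~\ref{thm:A} holds. Hence \(\Phi_{\G^{(n)}}\colon C_c(\G^{(n)},\Z)\otimes_{\Z}A\to C_c(\G^{(n)},A)\) is an isomorphism in every degree.

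\textbf{Compatibility with the boundaries.} Next I will check that the \(\Phi_{\G^{(n)}}\) assemble into a chain isomorphism, that is,
\[
\delta_n^{A}\circ\Phi_{\G^{(n)}}=\Phi_{\G^{(n-1)}}\circ(\delta_n^{\Z}\otimes\mathrm{id}_A).
\]
It suffices to show that \(\Phi\) commutes with the pushforward \(\phi_*\) along each face map \(\phi=d_i\). On an elementary tensor \(f\otimes a\) this is the identity
\[
\sum_{\phi(x)=y}f(x)a=\Bigl(\sum_{\phi(x)=y}f(x)\Bigr)a,
\]
which is distributivity of the \(\Z\)-action on \(A\). Taking alternating sums over \(i\) then gives the displayed identity. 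The degree \(0\) case is trivial since \(\delta_0=0\).

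\textbf{Freeness and the algebraic theorem.} The algebraic universal coefficient theorem \cite[Theorem~3A.3]{Hatcher2002} requires each \(C_c(\G^{(n)},\Z)\) to be free abelian. This is the only step that needs real care. I would argue as follows:
\begin{itemize}
\item The group \(C_c(X,\Z)\) is torsion-free.
\item It is the directed union of the finitely generated subgroups spanned by the characteristic functions of finite disjoint families of compact open sets, by Lemma~\ref{lem:refine}.
\item Hence it is flat, which already suffices for the Künneth/UCT short exact sequence over the PID \(\Z\).
\item Alternatively, one can cite the known fact that \(C_c(X,\Z)\) is free for such \(X\); for second countable \(X\) it is countable, and Nöbeling-type or Bergman arguments apply.
\end{itemize}
To stay safe I would use the version of the theorem for complexes of flat (torsion-free) modules over \(\Z\). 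There \(\Tor_1^{\Z}(C_n,A)=0\) and the same proof via the short exact sequence \(0\to Z_n\to C_n\to B_{n-1}\to0\) goes through, because subgroups of torsion-free groups are torsion-free and hence flat over \(\Z\).

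\textbf{Transport and the case \(n=0\).} Applying the algebraic theorem gives the short exact sequence
\[
0\to H_n(\G)\otimes_{\Z}A\to H_n\bigl(C_\bullet(\G;\Z)\otimes_{\Z}A\bigr)\to\Tor_1^{\Z}\bigl(H_{n-1}(\G),A\bigr)\to0.
\]
Composing with the map induced on homology by the chain isomorphism \(\Phi\) produces \(\kappa_n\) and \(\iota_n\). For \(n=0\), \(H_{-1}=0\), so \(\Tor_1^{\Z}(H_{-1}(\G),A)=0\) and \(\kappa_0\) is an isomorphism.
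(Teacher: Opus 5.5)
Your proof is correct. It agrees with the paper's proof in its first two steps: the degreewise isomorphisms $\Phi_{\G^{(n)}}$ come from Lemma~\ref{lem:basis} and Theorem~\ref{thm:A}, and compatibility with each pushforward $(d_i)_*$ is checked on elementary tensors. The difference is the input that makes the algebraic theorem applicable.

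The paper shows that each $C_c(\G^{(n)},\Z)$ is genuinely free. Its elements are bounded by Lemma~\ref{lem:refine}, the bounded integer valued functions on a set form a free abelian group by N\"obeling's theorem, and subgroups of free abelian groups are free. So \cite[Theorem~3A.3]{Hatcher2002} can be quoted verbatim.

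You use only that these groups are torsion free, hence flat over $\Z$, and rerun the standard argument. Flatness of $B_{n-1}\subseteq C_{n-1}$ keeps $0\to Z_n\otimes A\to C_n\otimes A\to B_{n-1}\otimes A\to 0$ exact. Flatness of $B_n$ and $Z_n$ makes $0\to B_n\to Z_n\to H_n(\G)\to 0$ a flat resolution, so the kernel and cokernel of $B_n\otimes A\to Z_n\otimes A$ are $\Tor_1^{\Z}(H_n(\G),A)$ and $H_n(\G)\otimes_{\Z}A$.

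Your route is more elementary, since it avoids N\"obeling's theorem altogether, and it produces the same maps $\kappa_n$ and $\iota_n$. Your directed union remark is superfluous, because torsion free already means flat over $\Z$. What your route gives up is the non-natural splitting that Hatcher's theorem provides for free complexes. The corollary does not claim that splitting, so nothing is lost here.

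In your alternative, countability of $C_c(X,\Z)$ for second countable $X$ plays no role. A countable torsion free group such as $\mathbb{Q}$ need not be free, and N\"obeling's theorem needs no countability. Your main line does not depend on that alternative, so this does not affect the proof.
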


\begin{proof}
By Lemma~\ref{lem:basis} and Theorem~\ref{thm:A} the maps \(\Phi_{\G^{(n)}}\) are isomorphisms, and they commute with the boundaries. Indeed, for a local homeomorphism \(\phi\colon X\to Y\), for \(f\in C_c(X,\Z)\), for \(a\in A\) and for \(y\in Y\),
\[
\Phi_Y(\phi_*(f)\otimes a)(y)=\Bigl(\sum_{\phi(x)=y}f(x)\Bigr)a
=\sum_{\phi(x)=y}\Phi_X(f\otimes a)(x)=\phi_*\bigl(\Phi_X(f\otimes a)\bigr)(y),
\]
and summing over the face maps with alternating signs gives \(\Phi_{\G^{(n-1)}}\circ(\delta_n\otimes\mathrm{id}_A)=\delta_n\circ\Phi_{\G^{(n)}}\), where the boundary on the left is that of \(C_\bullet(\G;\Z)\) and the one on the right is that of \(C_\bullet(\G;A)\). Each \(C_c(\G^{(n)},\Z)\) is free abelian: by Lemma~\ref{lem:refine} its elements have finite image and are in particular bounded, the group of all bounded integer valued functions on a set is free abelian by the theorem of N\"obeling \cite[Satz~1]{Nobeling1968}, and a subgroup of a free abelian group is free. Applying \cite[Theorem~3A.3]{Hatcher2002} to the complex \(C_\bullet(\G;\Z)\) and transporting the result along \(\Phi_\bullet\) now gives the asserted sequence.
\end{proof}

The two proofs of this statement take different routes, and the difference is what the rest of the paper turns on. In \cite{MillerSteinberg2024} the chain group in degree \(n\) with coefficients in \(A\) is defined as the span of the functions \(a\,\chi_V\) with \(V\) compact open, so that its identification with a tensor product is a statement about that span. Here the chain group consists of all compactly supported continuous functions into \(A\), and the identification comes from the finite disjoint refinement of Lemma~\ref{lem:refine}, while the freeness of the integral chain groups rests on N\"obeling's theorem in both treatments. Theorem~\ref{thm:A} says when the two chain groups agree, and Example~\ref{ex:cantor} shows that for a coefficient group carrying a nondiscrete topology the two of them can already differ in degree zero.

The condition of Theorem~\ref{thm:A} is strictly weaker than discreteness of the coefficient group, as the following example on a discrete locally compact space shows.

\begin{example}
\label{ex:positive}
Let \(X\) be the integers with the discrete topology and let \(A\) be the additive group of real numbers with its usual topology, which is not discrete. Every compact subset of \(X\) is finite, so every element of \(C_c(X,A)\) has finite support and is locally constant, every point of \(X\) being isolated in it. By Theorem~\ref{thm:A} the map \(\Phi_X\) is an isomorphism, even though the coefficient group \(A\) that we have taken here is not discrete, so that discreteness of \(A\) is not necessary for the conclusion.
\end{example}

The condition does fail, and it fails already on the space that motivates the whole subject, namely the Cantor space with the real numbers as coefficients.

\begin{example}
\label{ex:cantor}
Let \(X=\{0,1\}^{\mathbb N}\) be the Cantor space with the product topology, which is compact Hausdorff with a basis of compact open cylinder sets, and let \(A\) be the additive group of real numbers with its usual topology. Define \(\xi\colon X\to A\) by \(\xi(x)\defeq\sum_{n\geq1}2^{-n}x_n\). The series converges uniformly, so \(\xi\) is continuous, and it has compact support because \(X\) is compact. It is not locally constant: a nonempty open set \(V\subseteq X\) contains a cylinder \(Z\) that fixes the first \(N\) coordinates, and the two points of \(Z\) that vanish in all coordinates after the coordinate \(N+1\) and differ in that one coordinate have values differing by \(2^{-N-1}\neq0\). By Theorem~\ref{thm:A} the map \(\Phi_X\) is injective and fails to be surjective, so it is not an isomorphism and the criterion above is not vacuous.
\end{example}

Taking for \(\G\) the unit groupoid on the Cantor space \(X\), whose arrows are its units, Example~\ref{ex:cantor} shows that the identification of the Moore complex with the integral complex tensored by \(A\) fails for non-discrete \(A\) already in degree \(0\). For that groupoid every face map is the identity, so \(\delta_1=0\) and hence \(H_0(\G)=C(X,\Z)\) and \(H_0(\G;A)=C(X,A)\), while \(H_{-1}(\G)=0\). The map \(\kappa_0\) of Corollary~\ref{cor:uct} is then the map \(\Phi_X\) of Example~\ref{ex:cantor}, which is not surjective, so it is the conclusion of the universal coefficient theorem and not merely its proof that fails for the real numbers on the Cantor space. Corollary~\ref{cor:uct} is therefore a statement about discrete coefficients in a strong sense: its proof does not merely happen to use discreteness, but the chain level input that carries it is available under the condition of Theorem~\ref{thm:A} and under no weaker one, and that condition is implied by discreteness of the coefficient group without being exhausted by it, as the discrete integers with real coefficients already show in Example~\ref{ex:positive}, where the comparison map is an isomorphism.

\section{Invariant Covers}
\label{sec:covers}

Throughout this section invariance does the work of covering the nerve, and openness or closedness of the cover decides what the resulting sequence says. We begin with the covering statement, which uses invariance and nothing else.

\begin{lemma}
\label{lem:nerve}
Let \(\G\) be a topological groupoid and let \(U\subseteq\G^{(0)}\) be invariant. Then \((\G|_U)^{(0)}=U\), and for every \(n\geq1\),
\begin{equation}
\label{eq:nerve-fibre}
(\G|_U)^{(n)}=\{(\gamma_1,\dots,\gamma_n)\in\G^{(n)} : s(\gamma_n)\in U\}.
\end{equation}
Consequently \((\G|_U)^{(n)}\) is open in \(\G^{(n)}\) if \(U\) is open in \(\G^{(0)}\), and clopen if \(U\) is clopen, and the face maps satisfy \(d_i\bigl((\G|_U)^{(n)}\bigr)\subseteq(\G|_U)^{(n-1)}\) for all \(i\). If \(U'\subseteq\G^{(0)}\) is a second invariant subset, then
\begin{equation}
\label{eq:nerve-lattice}
(\G|_{U\cap U'})^{(n)}=(\G|_U)^{(n)}\cap(\G|_{U'})^{(n)},
\end{equation}
while \(U\cup U'=\G^{(0)}\) implies \(\G^{(n)}=(\G|_U)^{(n)}\cup(\G|_{U'})^{(n)}\) in every degree \(n\).
\end{lemma}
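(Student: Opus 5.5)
The plan is to prove the fibre description \eqref{eq:nerve-fibre} first and then derive every other assertion from it by elementary set manipulations.

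\textbf{Unit space.} The statement \((\G|_U)^{(0)}=U\) comes straight from Definition~\ref{def:invariant}. Every unit \(x\in U\) satisfies \(r(x)=s(x)=x\in U\). Conversely, a unit of \(\G|_U\) is a unit \(x\) of \(\G\) with \(s(x)\in U\).

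\textbf{The fibre description.} For \(n\geq1\), a composable tuple lies in \((\G|_U)^{(n)}\) exactly when all ranges and sources of its entries lie in \(U\). Such a tuple certainly has \(s(\gamma_n)\in U\). For the reverse inclusion, take a composable tuple with \(s(\gamma_n)\in U\) and argue by downward induction on \(i\):
\begin{itemize}
\item invariance applied to \(\gamma_i\) turns \(s(\gamma_i)\in U\) into \(r(\gamma_i)\in U\);
\item composability gives \(s(\gamma_{i-1})=r(\gamma_i)\in U\).
\end{itemize}
This places every \(\gamma_i\) in \(\G|_U\). The composite \(\gamma_i\gamma_{i+1}\) also lies in the subgroupoid, so the tuple belongs to \((\G|_U)^{(n)}\).

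\textbf{Openness and closedness.} By \eqref{eq:nerve-fibre}, \((\G|_U)^{(n)}\) is the preimage of \(U\) under the continuous map \(\G^{(n)}\to\G^{(0)}\) given by \((\gamma_1,\dots,\gamma_n)\mapsto s(\gamma_n)\). Preimages of open (respectively clopen) sets are open (respectively clopen), so this settles the topological claims in every degree. In degree \(0\) the claim is simply that \(U\) itself is open, respectively clopen.

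\textbf{Face maps.} These are the only place needing a little care, and I expect them to be the main (mild) obstacle, because of the degree-one conventions.
\begin{itemize}
\item For \(n=1\), the face maps \(d_0=s\) and \(d_1=r\) send an arrow of \(\G|_U\) into \(U\), by invariance or by the definition of the reduction.
\item For \(n\geq2\), the faces \(d_i\) with \(i<n\) keep the last source unchanged: either \(\gamma_n\) survives, or for \(i=n-1\) the last entry becomes \(\gamma_{n-1}\gamma_n\), whose source is \(s(\gamma_n)\).
\item For \(i=n\), the new last source is \(s(\gamma_{n-1})=r(\gamma_n)\), which lies in \(U\) by invariance.
\end{itemize}
In each case \eqref{eq:nerve-fibre} then shows that the image lies in \((\G|_U)^{(n-1)}\). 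Alternatively, one can note directly that \(\G|_U\) is a subgroupoid, so its nerve is preserved by the face maps.

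\textbf{Intersection and union.} An intersection of invariant sets is invariant. Applying \eqref{eq:nerve-fibre} to \(U\), \(U'\) and \(U\cap U'\), all three nerves are cut out by the single condition on \(s(\gamma_n)\). Hence \eqref{eq:nerve-lattice} reduces to the identity \(\{s(\gamma_n)\in U\cap U'\}=\{s(\gamma_n)\in U\}\cap\{s(\gamma_n)\in U'\}\). Likewise, if \(U\cup U'=\G^{(0)}\), every tuple has \(s(\gamma_n)\) in \(U\) or in \(U'\), which gives the covering statement. In degree \(0\) both identities are trivial.
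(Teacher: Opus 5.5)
Your proposal is correct and follows the paper's proof step for step. You prove the fibre description first, take preimages under \((\gamma_1,\dots,\gamma_n)\mapsto s(\gamma_n)\), use the same case analysis for the face maps, and read the lattice identities off the single condition on \(s(\gamma_n)\). The one cosmetic difference is how you get \(r(\gamma_k)\in U\): you use downward induction, applying invariance to one entry at a time, whereas the paper applies invariance once to the product \(\gamma_k\gamma_{k+1}\cdots\gamma_n\), whose source is \(s(\gamma_n)\) and whose range is \(r(\gamma_k)\).
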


\begin{proof}~
\begin{enumerate}
\item\label{st:nerve-desc}
\textbf{The description \eqref{eq:nerve-fibre}.} If every \(\gamma_k\) lies in \(\G|_U\) then \(s(\gamma_n)\in U\). Conversely let \((\gamma_1,\dots,\gamma_n)\in\G^{(n)}\) with \(s(\gamma_n)\in U\). For each \(k\) the product \(\gamma_k\gamma_{k+1}\cdots\gamma_n\) is defined and has source \(s(\gamma_n)\) and range \(r(\gamma_k)\), so invariance of \(U\) gives \(r(\gamma_k)\in U\). For \(k<n\) we have \(s(\gamma_k)=r(\gamma_{k+1})\in U\), and \(s(\gamma_n)\in U\) by assumption. Hence each \(\gamma_k\) lies in \(r^{-1}(U)\cap s^{-1}(U)=\G|_U\). The case \(n=0\) is the definition of the reduction.

\item\label{st:nerve-top}
\textbf{Topology.} For \(n\geq1\) let \(\sigma_n\colon\G^{(n)}\to\G^{(0)}\) be the map \(\sigma_n(\gamma_1,\dots,\gamma_n)\defeq s(\gamma_n)\), which is continuous because \(s\) is, and put \(\sigma_0\defeq\mathrm{id}_{\G^{(0)}}\). By \eqref{eq:nerve-fibre} the set \((\G|_U)^{(n)}\) is the preimage \(\sigma_n^{-1}(U)\), so that it is open whenever \(U\) is open and clopen whenever \(U\) is clopen in \(\G^{(0)}\).

\item\label{st:nerve-faces}
\textbf{Face maps.} Let \((\gamma_1,\dots,\gamma_n)\in(\G|_U)^{(n)}\), so that \(s(\gamma_n)\in U\). For \(n=1\) both \(d_0=s\) and \(d_1=r\) land in \(U\) by invariance. For \(n\geq2\) and \(i<n\) the last entry of \(d_i(\gamma_1,\dots,\gamma_n)\) is \(\gamma_n\) or \(\gamma_{n-1}\gamma_n\), and in both cases its source is \(s(\gamma_n)\in U\). For \(i=n\) the last entry is \(\gamma_{n-1}\), and \(s(\gamma_{n-1})=r(\gamma_n)\in U\) by invariance. In each case the source of the last entry of the image lies in \(U\), so that the description \eqref{eq:nerve-fibre} applies to the image tuple as well.

\item\label{st:nerve-lattice}
\textbf{Intersections and unions.} Both assertions follow from \eqref{eq:nerve-fibre}, since \(s(\gamma_n)\) lies in \(U\cap U'\) when and only when it lies in both, and lies in \(U\cup U'=\G^{(0)}\) always.\qedhere
\end{enumerate}
\end{proof}

We now prove that a clopen invariant cover splits the groupoid. The proof uses Theorem~\ref{thm:C} and Proposition~\ref{prop:ses}, both of which are proved below and neither of which depends on the statement of Theorem~\ref{thm:B}, so that the argument is not circular.

\begin{proof}[Proof of Theorem~\ref{thm:B}]~
\begin{enumerate}
\item\label{st:B-pieces}
\textbf{The three pieces.} The sets \(W_1=U_1\setminus U_2\), \(W_0=U_1\cap U_2\) and \(W_2=U_2\setminus U_1\) are pairwise disjoint with union \(U_1\cup U_2=\G^{(0)}\). Each is clopen, being obtained from the clopen sets \(U_1,U_2\) by intersection and complement, and each is invariant, being a union of orbits because \(U_1\) and \(U_2\) are. By Lemma~\ref{lem:reduction-ample} each of the three reductions \(\G|_{W_j}\) is an ample groupoid, and its unit space is the clopen set \(W_j\).

\item\label{st:B-nerve}
\textbf{The nerve splits.} Fix \(n\geq0\). By \eqref{eq:nerve-fibre} the sets \((\G|_{W_j})^{(n)}\) for \(j\in\{1,0,2\}\) are the preimages of \(W_1,W_0,W_2\) under the map \(\sigma_n\) of step~\ref{st:nerve-top} in the proof of Lemma~\ref{lem:nerve}, so they are pairwise disjoint clopen subsets of \(\G^{(n)}\) with union \(\G^{(n)}\). In particular the groupoid \(\G=\G^{(1)}\) is the disjoint union of the three clopen subgroupoids \(\G|_{W_j}\).

\item\label{st:B-complex}
\textbf{The complex splits.} Since the three pieces are clopen and partition \(\G^{(n)}\), restriction and extension by zero identify
\[
C_c(\G^{(n)},A)\cong\bigoplus_{j\in\{1,0,2\}}C_c\bigl((\G|_{W_j})^{(n)},A\bigr).
\]
Indeed, a compactly supported continuous function is the sum of its restrictions to the pieces, each of which is continuous and compactly supported because the pieces are clopen, and the decomposition is unique. By step~\ref{st:nerve-faces} in the proof of Lemma~\ref{lem:nerve} each face map preserves each piece, so the identification is one of chain complexes, and the isomorphism \(\theta_n\) in homology follows because the homology of a finite direct sum of complexes is the direct sum of the homology groups of the summands.

\item\label{st:B-degen}
\textbf{Degeneracy of the Mayer--Vietoris sequence.} Clopen invariant subsets are in particular open invariant, so Theorem~\ref{thm:C} applies to the cover \(U_1,U_2\). Its first map is induced by the chain map \(\alpha_n\) of Proposition~\ref{prop:ses}, and by step~\ref{st:B-complex} applied to \(U_1\) and to \(U_2\) we may write
\[
C_c\bigl((\G|_{U_1})^{(n)},A\bigr)\cong C_c\bigl((\G|_{W_1})^{(n)},A\bigr)\oplus C_c\bigl((\G|_{W_0})^{(n)},A\bigr),
\]
and similarly for \(U_2\) with summands indexed by \(W_0\) and \(W_2\). Under these identifications the map \(\alpha_n\) sends \(\xi\) to \(\bigl((0,\xi),(-\xi,0)\bigr)\). Hence \(\alpha_\bullet\) is a split monomorphism of chain complexes, with retraction the projection onto the summand indexed by \(W_0\) in the first coordinate, so the induced maps in homology are injective in every degree. Exactness of the sequence of Theorem~\ref{thm:C} then forces \(\partial_n=0\) for all \(n\), and the sequence breaks into short exact sequences, which are split by that same retraction.\qedhere
\end{enumerate}
\end{proof}

We turn to open invariant covers. The following is the chain level statement behind Theorem~\ref{thm:C}, and it is in its proof that Lemma~\ref{lem:split}, and with it the existence of a basis of compact open sets rather than mere \'etaleness, does the essential work.

\begin{prop}
\label{prop:ses}
Let \(\G\) be an ample groupoid, let \(A\) be an abelian topological group, and let \(U_1,U_2\subseteq\G^{(0)}\) be open invariant subsets with \(U_1\cup U_2=\G^{(0)}\). Put \(U_0\defeq U_1\cap U_2\). For \(n\geq0\) define
\[
\alpha_n(\xi)\defeq\bigl(\xi^{(1)},-\xi^{(2)}\bigr),
\qquad
\beta_n(\xi_1,\xi_2)\defeq\widetilde\xi_1+\widetilde\xi_2,
\]
where \(\xi^{(i)}\) is the extension by zero of \(\xi\in C_c((\G|_{U_0})^{(n)},A)\) to \((\G|_{U_i})^{(n)}\) and \(\widetilde\xi_i\) is the extension by zero of \(\xi_i\in C_c((\G|_{U_i})^{(n)},A)\) to \(\G^{(n)}\). Then
\[
0\longrightarrow C_\bullet(\G|_{U_0};A)\xto{\alpha_\bullet}
C_\bullet(\G|_{U_1};A)\oplus C_\bullet(\G|_{U_2};A)\xto{\beta_\bullet}
C_\bullet(\G;A)\longrightarrow 0
\]
is a short exact sequence of chain complexes of abelian groups and homomorphisms.
\end{prop}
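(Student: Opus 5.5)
The plan is to check, degree by degree, that the maps are well defined, that they are chain maps, and that the sequence is exact at each of the three spots. Surjectivity of \(\beta_n\) is the step I expect to be the main obstacle.

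\textbf{Well-definedness and the chain map property.} Fix \(n\geq0\). By Lemma~\ref{lem:nerve}, \((\G|_{U_i})^{(n)}\) is open in \(\G^{(n)}\), and \((\G|_{U_0})^{(n)}=(\G|_{U_1})^{(n)}\cap(\G|_{U_2})^{(n)}\) is open in each. Extension by zero along an open inclusion \(O\subseteq Y\) sends \(C_c(O,A)\) into \(C_c(Y,A)\). The support \(K\) is compact in \(O\), hence closed in the Hausdorff space \(Y\). So the extended function is continuous on \(O\) and on the open set \(Y\setminus K\), and these two open sets cover \(Y\).

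Compatibility with the boundaries then comes down to one fact: extension by zero commutes with pushforward along each face map. For \(\xi\in C_c((\G|_U)^{(n)},A)\) and a tuple \(y\in\G^{(n-1)}\), we compare \(\sum_{d_i(x)=y}\widetilde\xi(x)\) with the reduced pushforward. Only points \(x\in(\G|_U)^{(n)}\) contribute, and those map into \((\G|_U)^{(n-1)}\) by Lemma~\ref{lem:nerve}. If \(y\notin(\G|_U)^{(n-1)}\), no such \(x\) exists and both sides vanish; otherwise the two sums agree term by term. The same argument works for the inclusions \(U_0\subseteq U_i\). Hence \(\alpha_\bullet\) and \(\beta_\bullet\) are chain maps.

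\textbf{Exactness on the left and in the middle.} Injectivity of \(\alpha_n\) is immediate. For the composite, \(\beta_n\alpha_n(\xi)=\widetilde\xi-\widetilde\xi=0\). Conversely, suppose \(\widetilde\xi_1=-\widetilde\xi_2\). Then \(\widetilde\xi_1\) vanishes off \((\G|_{U_1})^{(n)}\cap(\G|_{U_2})^{(n)}=(\G|_{U_0})^{(n)}\), by \eqref{eq:nerve-lattice}. Put \(\xi\defeq\xi_1|_{(\G|_{U_0})^{(n)}}\); it is continuous. Its support is compact, because \(\supp\xi_1\) is compact and contained in the open set \((\G|_{U_0})^{(n)}\). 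Then \(\alpha_n(\xi)=(\xi_1,\xi_2)\), which proves exactness in the middle.

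\textbf{Surjectivity of \(\beta_n\).} This is the step that genuinely uses ampleness and Lemma~\ref{lem:split}. Let \(\eta\in C_c(\G^{(n)},A)\) with support \(K\). By Lemma~\ref{lem:nerve}, \(K\) is covered by the open sets \(V_i\defeq(\G|_{U_i})^{(n)}\). By Lemma~\ref{lem:basis}, \(\G^{(n)}\) is locally compact Hausdorff with a basis of compact open sets, so Lemma~\ref{lem:split} applies. It yields disjoint compact open sets \(K_i\subseteq V_i\) with \(K\subseteq K_1\cup K_2\). Set \(\xi_i\defeq(\eta\chi_{K_i})|_{V_i}\). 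Because \(K_i\) is clopen, \(\eta\chi_{K_i}\) is continuous, and its support lies in the compact set \(K_i\subseteq V_i\), so \(\xi_i\in C_c(V_i,A)\). Finally \(\widetilde\xi_1+\widetilde\xi_2=\eta\chi_{K_1\cup K_2}=\eta\), since \(K_1\) and \(K_2\) are disjoint and cover \(K\).

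Note that a partition of unity would not work here, because \(A\) is only a topological group: we cannot multiply its elements by real-valued functions. This is why disjoint clopen pieces are needed.
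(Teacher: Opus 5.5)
Your proposal follows the paper's proof essentially step for step. The common steps are: extension by zero along the open inclusions of Lemma~\ref{lem:nerve}; the chain map property from compatibility of extension by zero with pushforward along face maps; a support argument for exactness in the middle; and Lemma~\ref{lem:split} with disjoint clopen pieces for surjectivity of \(\beta_n\). Your remark that partitions of unity are unavailable for a general topological group \(A\) is exactly why a basis of compact open sets is needed there.

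One step is under-justified. In the middle exactness argument you show only that the nonzero set of \(\xi_1\) lies in \((\G|_{U_0})^{(n)}\), and then you assert \(\supp\xi_1\subseteq(\G|_{U_0})^{(n)}\). This does not follow, because a function vanishing off an open set can have support reaching outside it. For instance, take real coefficients on the space of Theorem~\ref{thm:D} with \(n=0\). The function on \(U_1\) equal to \(1/m\) at the integers \(m\geq1\) and to \(0\) elsewhere lies in \(C_c(U_1,\mathbb R)\) and vanishes off \(U_0=\Z\). Yet \(+\infty\) lies in its support, and its restriction to \(U_0\) does not have compact support. What closes the step is the second function. Since \(\widetilde\xi_1=-\widetilde\xi_2\), the two functions have the same nonzero set and hence the same support in \(\G^{(n)}\). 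Moreover \(\supp\widetilde\xi_i=\supp\xi_i\subseteq(\G|_{U_i})^{(n)}\), because a compact subset of \((\G|_{U_i})^{(n)}\) is closed in the Hausdorff space \(\G^{(n)}\). So the common support is a compact subset of \((\G|_{U_1})^{(n)}\cap(\G|_{U_2})^{(n)}=(\G|_{U_0})^{(n)}\), which is how the paper argues.
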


\begin{proof}
Fix \(n\geq0\). By Lemma~\ref{lem:nerve} the sets \((\G|_{U_1})^{(n)}\) and \((\G|_{U_2})^{(n)}\) are open in \(\G^{(n)}\), they cover \(\G^{(n)}\), and their intersection is \((\G|_{U_0})^{(n)}\). Extension by zero along an open inclusion of locally compact Hausdorff spaces carries \(C_c\) to \(C_c\), so \(\alpha_n\) and \(\beta_n\) are defined on the groups in question.
\begin{enumerate}
\item\label{st:ses-inj}
\textbf{The map \(\alpha_n\) is injective and \(\beta_n\alpha_n=0\).} Extension by zero is injective, so \(\alpha_n\) is injective as well. For \(\xi\in C_c((\G|_{U_0})^{(n)},A)\) both \(\widetilde{\xi^{(1)}}\) and \(\widetilde{\xi^{(2)}}\) equal the extension by zero of \(\xi\) to \(\G^{(n)}\), so the two terms cancel in the sum that defines \(\beta_n\), and therefore the composite \(\beta_n\alpha_n\) satisfies \(\beta_n(\alpha_n(\xi))=0\) for every element \(\xi\) of that chain group.

\item\label{st:ses-ker}
\textbf{The kernel of \(\beta_n\).} Let \(\beta_n(\xi_1,\xi_2)=0\), so that \(\widetilde\xi_1=-\widetilde\xi_2\) on \(\G^{(n)}\). The function \(\widetilde\xi_1\) vanishes off \((\G|_{U_1})^{(n)}\), and it vanishes off \((\G|_{U_2})^{(n)}\) because \(\widetilde\xi_2\) does. Hence its support is contained in \(\supp(\widetilde\xi_1)\cap\supp(\widetilde\xi_2)\), which is a compact subset of \((\G|_{U_1})^{(n)}\cap(\G|_{U_2})^{(n)}=(\G|_{U_0})^{(n)}\). Therefore \(\xi\defeq\widetilde\xi_1|_{(\G|_{U_0})^{(n)}}\) lies in \(C_c((\G|_{U_0})^{(n)},A)\), and \(\xi^{(1)}=\xi_1\) and \(-\xi^{(2)}=\xi_2\) by construction, so the pair \((\xi_1,\xi_2)=\alpha_n(\xi)\) lies in the image of the map \(\alpha_n\), and the kernel of \(\beta_n\) is contained in that image and hence equal to it by step~\ref{st:ses-inj}.

\item\label{st:ses-surj}
\textbf{The map \(\beta_n\) is surjective.} Let \(\eta\in C_c(\G^{(n)},A)\). By Lemma~\ref{lem:basis} the space \(\G^{(n)}\) is locally compact Hausdorff with a basis of compact open sets, so Lemma~\ref{lem:split} applied to the compact set \(\supp(\eta)\) and the open sets \((\G|_{U_1})^{(n)},(\G|_{U_2})^{(n)}\) gives disjoint compact open sets \(K_i\subseteq(\G|_{U_i})^{(n)}\) with \(\supp(\eta)\subseteq K_1\cup K_2\). Each \(K_i\) is clopen, so \(\eta\chi_{K_i}\) is continuous, and its support is contained in \(K_i\) and hence compact. Let \(\eta_i\) be its restriction to \((\G|_{U_i})^{(n)}\). Then \(\beta_n(\eta_1,\eta_2)=\eta\chi_{K_1}+\eta\chi_{K_2}=\eta\), the last equality holding because \(\eta\) vanishes off \(K_1\cup K_2\), a compact open set that contains its whole support.

\item\label{st:ses-chain}
\textbf{Chain maps.} By step~\ref{st:nerve-faces} in the proof of Lemma~\ref{lem:nerve} the face maps restrict to each \((\G|_{U_i})^{(n)}\), so the boundaries of the three complexes are defined. Extension by zero commutes with pushforward along a face map \(d\colon\G^{(n)}\to\G^{(n-1)}\). Indeed, for \(f\in C_c((\G|_{U})^{(n)},A)\) with \(U\) invariant and for \(y\in\G^{(n-1)}\), the sum \(\sum_{d(\gamma)=y}\widetilde f(\gamma)\) runs effectively over those \(\gamma\in(\G|_U)^{(n)}\) with \(d(\gamma)=y\). If \(y\notin(\G|_U)^{(n-1)}\) there are none, since \(d\) maps \((\G|_U)^{(n)}\) into \((\G|_U)^{(n-1)}\), and the sum is \(0\). If \(y\in(\G|_U)^{(n-1)}\) the sum is the pushforward of \(f\) along the restricted face map. Summing over the face maps with alternating signs shows that \(\alpha_\bullet\) and \(\beta_\bullet\) commute with the boundaries.\qedhere
\end{enumerate}
\end{proof}

\begin{proof}[Proof of Theorem~\ref{thm:C}]
Apply the long exact homology sequence of a short exact sequence of chain complexes \cite[Theorem~1.3.1]{Weibel1994} to the sequence of Proposition~\ref{prop:ses}, using that homology takes the direct sum of two complexes to the direct sum of their homology groups, which identifies the middle term of that sequence.
\end{proof}

The connecting map is the usual one, and we record it here for use in \S\ref{sec:example}. Let \([c]\in H_n(\G;A)\) be represented by a cycle \(c\). Choose \(b\) with \(\beta_n(b)=c\), which is possible by Proposition~\ref{prop:ses}. Then \(\beta_{n-1}(\delta_n b)=\delta_n(\beta_n b)=\delta_n c=0\), so there is a unique \(a\) with \(\alpha_{n-1}(a)=\delta_n b\), and \(a\) is a cycle: for \(n\geq2\) because \(\alpha_{n-2}(\delta_{n-1}a)=\delta_{n-1}\delta_n b=0\) and \(\alpha_{n-2}\) is injective, and for \(n=1\) because \(\delta_0=0\) makes every \(0\)-chain a cycle. The class \(\partial_n([c])\) is by definition the class \([a]\) of the cycle \(a\) that is so constructed.

\section{A Cover with Nonvanishing Connecting Map}
\label{sec:example}

We now prove Theorem~\ref{thm:D}. The homology of the four groupoids involved is computed by the following lemma, which collapses Matui's spectral sequence for a semidirect product of an \'etale groupoid by a countable discrete group.

\begin{lemma}
\label{lem:transformation}
Let \(Y\) be a locally compact Hausdorff space with a basis of compact open sets, let \(T\colon Y\to Y\) be a homeomorphism, and let \(A\) be an abelian topological group. Let \(\G\) be the transformation groupoid \(\Z\ltimes_T Y\), that is, the space \(\Z\times Y\) with \(r(k,y)=T^k y\), with \(s(k,y)=y\) and with \((k,T^{l}y)(l,y)=(k+l,y)\). Write \(M\defeq C_c(Y,A)\) and let \(t\) be the automorphism \(T_*\) of \(M\). Then
\[
H_0(\G;A)\cong M/(t-1)M,
\qquad
H_1(\G;A)\cong\ker(t-1),
\qquad
H_n(\G;A)=0\ \text{for}\ n\geq2 .
\]
The isomorphism in degree zero is induced by the identity map of \(M\), so that it is compatible with extension by zero along an inclusion of open invariant subsets of \(Y\).
\end{lemma}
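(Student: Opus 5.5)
The plan is to identify the Moore complex of \(\G=\Z\ltimes_T Y\) with the standard bar complex computing group homology \(H_\bullet(\Z;M)\) of the \(\Z\)-module \(M=C_c(Y,A)\), where the generator acts by \(t=T_*\). Then we use the length-one free resolution \(0\to\Z[\Z]\xto{u-1}\Z[\Z]\to\Z\to0\) of the trivial module.

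\textbf{Step 1: identify the nerve.} We have \(\G^{(n)}\cong\Z^n\times Y\), via
\[
((k_1,y_1),\dots,(k_n,y_n))\mapsto(k_1,\dots,k_n,y_n),
\]
since the composability conditions force \(y_i=T^{k_{i+1}}y_{i+1}\). This homeomorphism identifies \(\G^{(n)}\) with a disjoint union of copies of \(Y\) indexed by the discrete set \(\Z^n\). A compact subset meets only finitely many copies, so
\[
C_c(\G^{(n)},A)\cong\bigoplus_{\Z^n}M=\Z[\Z^n]\otimes_{\Z}M .
\]
This uses no discreteness of \(A\); only the clopen decomposition is needed.

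\textbf{Step 2: compute the face maps.} The inner faces add adjacent integer coordinates and keep \(y_n\). The face \(d_n\) drops \(k_n\) and replaces \(y_n\) by \(T^{k_n}y_n\), so its pushforward on the summand indexed by \(k_n\) acts by \(t^{k_n}\). The face \(d_0\) drops \(k_1\). Thus \(\delta_n\) is exactly the bar differential of \(C_\bullet(\Z;M)=\Z[\Z^n]\otimes M\), up to the usual left/right convention, which is harmless. Hence \(H_n(\G;A)\cong H_n(\Z;M)=\Tor^{\Z[\Z]}_n(\Z,M)\). This is the degenerate case of Matui's spectral sequence.

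\textbf{Step 3: evaluate Tor.} Tensoring the resolution with \(M\) gives the complex \(M\xto{t-1}M\), concentrated in degrees \(1\) and \(0\). Its homology is \(M/(t-1)M\) in degree \(0\), \(\ker(t-1)\) in degree \(1\), and \(0\) above.

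\textbf{Degree zero and naturality.} Here I would argue directly rather than through Tor. We have \(C_0=M\) and \(C_1\cong\bigoplus_{k\in\Z}M\) with \(\delta_1=s_*-r_*\), which sends \(f\) in summand \(k\) to \(f-t^kf\). The image is generated by the \((1-t^k)M\), and these lie in \((t-1)M\) because \(1-t^k\) is divisible by \(1-t\) in \(\Z[t,t^{-1}]\). Conversely \(k=1\) already gives all of \((t-1)M\). So the isomorphism in degree zero is induced by the identity of \(M\). For an open invariant \(V\subseteq Y\), extension by zero \(C_c(V,A)\to M\) commutes with \(t\), which gives the compatibility claim.

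\textbf{Main obstacle.} The delicate point is Step 2, namely checking that the pushforwards match the bar differential exactly, including signs and which face carries the \(t^{k}\) twist. If that bookkeeping becomes messy, I would instead compare with a chain map to the small complex \(M\xto{t-1}M\). In degree \(1\) it would send the summand \(k\) to \((1+t+\dots+t^{k-1})f\), with the obvious modification for \(k<0\). I would then prove this is a quasi-isomorphism by the standard contracting homotopy for \(\Z\).
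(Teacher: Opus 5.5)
Your proposal is correct, but it reaches the key identification \(H_n(\G;A)\cong H_n(\Z,M)\) by a different route from the paper. The paper first computes the homology of the unit groupoid on \(Y\): all its face maps are the identity, so only \(H_0=M\) survives. It then invokes Matui's spectral sequence \(E^2_{p,q}=H_p(\Z,H_q(\mathcal Y;A))\Rightarrow H_{p+q}(\G;A)\) for the semidirect product, which collapses onto the row \(q=0\).

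You instead identify \(\G^{(n)}\) with \(\Z^n\times Y\) and check by hand that \(\delta_n\) is the unnormalised bar differential with the twist on the last face. Your bookkeeping is right: \(d_0\) drops \(k_1\), the inner faces add adjacent entries, and \(d_n\) replaces \(y_n\) by \(T^{k_n}y_n\), so \((d_n)_*\) acts by \(t^{k_n}\). This is exactly the complex obtained by tensoring the right bar resolution of \(\Z\) over \(\Z[\Z]\) with \(M\), where \(M\) enters through \(t\) itself. So the Tor computation applies directly, and your fallback chain map \(f\mapsto(1+t+\dots+t^{k-1})f\) (correct up to sign) is not needed. From there the two arguments coincide: the same length-one resolution, and the same direct chain computation in degree zero.

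Your route is the degenerate case of the spectral sequence made explicit at chain level, and it buys three things:
\begin{itemize}
\item It is self-contained.
\item It uses only that \(\G^{(n)}\) is a disjoint union of clopen copies of \(Y\) indexed by the discrete set \(\Z^n\). So it visibly covers arbitrary topological coefficients and the non-compact unit spaces \(U_0,U_1,U_2\) of Theorem~\ref{thm:D}, without your having to check the hypotheses of the cited theorem.
\item It gives explicit representatives: the class corresponding to \(f\in\ker(t-1)\) is the \(1\)-cycle equal to \(f\) on \(\{1\}\times Y\) and zero elsewhere.
\end{itemize}
The paper's route is shorter given the citation, and it sits inside a framework that handles actions of arbitrary countable discrete groups.
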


\begin{proof}~
\begin{enumerate}
\item\label{st:tr-unit}
\textbf{The unit groupoid.} Let \(\mathcal Y\) be the groupoid whose arrows are the points of \(Y\) and whose structure maps are all the identity. It is \'etale, and \(\mathcal Y^{(n)}=Y\) with every face map the identity, so \(\delta_n=\sum_{i=0}^{n}(-1)^i\mathrm{id}_M\), which is \(0\) for odd \(n\) and \(\mathrm{id}_M\) for even \(n\geq2\). Hence \(H_0(\mathcal Y;A)=M\), and for odd \(n\) we get \(H_n=M/M=0\), while for even \(n\geq2\) we get \(H_n=0\) as well, because in that case the boundary \(\delta_n\) is injective on \(M\).

\item\label{st:tr-spectral}
\textbf{The spectral sequence.} The homeomorphism \(T\) generates an action of \(\Z\) on \(\mathcal Y\) whose semidirect product is \(\G\). By \cite[Theorem~3.8(2)]{Matui2012} there is a spectral sequence \(E^2_{p,q}=H_p(\Z,H_q(\mathcal Y;A))\Rightarrow H_{p+q}(\G;A)\), where \(\Z\) acts on \(H_q(\mathcal Y;A)\) through the action. By step~\ref{st:tr-unit} the row \(q=0\) is the only nonzero one, so the spectral sequence collapses and \(H_n(\G;A)\cong H_n(\Z,M)\), the module structure on \(M=H_0(\mathcal Y;A)\) being the one that is induced by the action of \(T\), namely the automorphism \(t\) of \(M\) that is given by pushforward of compactly supported functions along the homeomorphism \(T\).

\item\label{st:tr-cyclic}
\textbf{Homology of the infinite cyclic group.} Identify the group ring of \(\Z\) with the ring \(R\) of Laurent polynomials in \(t\), and let \(\varepsilon\colon R\to\Z\) be the augmentation. Then
\(
0\longrightarrow R\xto{t-1}R\xto{\varepsilon}\Z\longrightarrow0
\)
is a free resolution of the trivial module \(\Z\), so that \(H_n(\Z,M)=\Tor^R_n(\Z,M)\) is \(M/(t-1)M\) for \(n=0\), is \(\ker(t-1)\) for \(n=1\) and vanishes for \(n\geq2\) \cite[Example~6.1.4]{Weibel1994}. The conclusion does not depend on whether one lets \(t\) act as \(T_*\) or as its inverse, since \(t-1\) and \(t^{-1}-1=-t^{-1}(t-1)\) differ by the unit \(-t^{-1}\) of \(R\) and therefore have the same kernel and the same image in the module \(M\).

\item\label{st:tr-degzero}
\textbf{Degree zero.} The last assertion is checked on chains and needs no spectral sequence. A \(1\)-chain of \(\G\) is a finite sum \(\sum_k g_k\) in which \(g_k\) is supported on \(\{k\}\times Y\) and is given by \(g_k(k,y)=f_k(y)\) for some \(f_k\in M\). From \(s(k,y)=y\) and \(r(k,y)=T^ky\) we get \(s_*(g_k)=f_k\) and \(r_*(g_k)=t^kf_k\), so that \(\delta_1(\sum_k g_k)=\sum_k(1-t^k)f_k\). Every \(1-t^k\) is a multiple of \(1-t\) in \(R\) and \(1-t\) occurs for \(k=1\), so the image of \(\delta_1\) is \((t-1)M\) and \(H_0(\G;A)\) is the quotient of \(M\) by it. Extension by zero of \(0\)-chains along an inclusion of open invariant subsets is compatible with this description, since the description uses no map other than the identity of \(M\) and the projection onto the quotient.\qedhere
\end{enumerate}
\end{proof}

\begin{proof}[Proof of Theorem~\ref{thm:D}]~
\begin{enumerate}
\item\label{st:D-space}
\textbf{The space.} Let \(X\defeq\Z\cup\{-\infty,+\infty\}\), and for \(N\in\Z\) put
\[
L_N\defeq\{-\infty\}\cup\{m\in\Z:m\leq N\},
\qquad
R_N\defeq\{+\infty\}\cup\{m\in\Z:m\geq N\}.
\]
Topologise \(X\) by the basis consisting of the singletons \(\{m\}\) with \(m\in\Z\) together with all \(L_N\) and all \(R_N\). This family is a basis because \(L_N\cap R_M\) is a finite set of integers, \(L_N\cap L_M=L_{\min(N,M)}\) and \(R_N\cap R_M=R_{\max(N,M)}\). Each \(L_N\) is open with open complement \(R_{N+1}\), hence clopen, and it is compact: a basic open set containing \(-\infty\) is some \(L_K\), and \(L_N\setminus L_K\) is finite. Symmetrically each \(R_N\) is compact open, and \(X\) itself is compact. Distinct integers are separated by singletons, an integer \(m\) and \(-\infty\) by \(\{m\}\) and \(L_{m-1}\), and \(-\infty\) and \(+\infty\) by \(L_0\) and \(R_1\), so \(X\) is Hausdorff. Thus \(X\) is compact and Hausdorff, and the sets listed above form a basis of the topology of \(X\) that consists of compact open sets, so that the space \(X\) satisfies the hypotheses of Lemma~\ref{lem:transformation} above.

\item\label{st:D-cover}
\textbf{The groupoid and the cover.} Let \(T(m)\defeq m+1\) for \(m\in\Z\) and \(T(\pm\infty)\defeq\pm\infty\). Then \(T(L_N)=L_{N+1}\), \(T(R_N)=R_{N+1}\) and \(T(\{m\})=\{m+1\}\), so \(T\) is a homeomorphism, and \(\G\defeq\Z\ltimes_T X\) is an ample groupoid. Its orbits are \(\Z\), \(\{-\infty\}\) and \(\{+\infty\}\), so the invariant subsets are the unions of these. Put \(U_1\defeq X\setminus\{-\infty\}\) and \(U_2\defeq X\setminus\{+\infty\}\). Both are invariant, both are open because \(U_1=R_0\cup\bigcup_{m<0}\{m\}\) and symmetrically for \(U_2\), and neither is closed because no basic open set containing a point at infinity misses \(\Z\). Moreover \(U_1\cup U_2=X\) and \(U_0\defeq U_1\cap U_2=\Z\). Each reduction of \(\G\) to an invariant subset \(U\) is the transformation groupoid \(\Z\ltimes_T U\) of the restricted action.

\item\label{st:D-modules}
\textbf{The four chain modules.} We claim that, with integer coefficients and with \(t=T_*\),
\[
C(X,\Z)\cong\Z\oplus R,
\qquad
C_c(U_1,\Z)\cong R,
\qquad
C_c(U_2,\Z)\cong R,
\qquad
C_c(U_0,\Z)\cong R,
\]
as modules over the ring \(R\) of Laurent polynomials in \(t\). Here the summand \(\Z\) of the first module is generated by \(\chi_X\) and carries the trivial action, while the four copies of \(R\) are free of rank one on \(\chi_{L_0}\), on \(\chi_{R_0}\), on \(\chi_{L_0}\) and on the characteristic function \(\chi_{\{0\}}\) of the singleton \(\{0\}\), respectively, the first \(\chi_{L_0}\) being taken in \(C(X,\Z)\).

For \(U_2\), note first that \(t\chi_{L_N}=\chi_{L_{N+1}}\). The functions \(\chi_{L_N}\) with \(N\in\Z\) span \(C_c(U_2,\Z)\). Indeed, given \(f\), continuity into the discrete group \(\Z\) makes \(f\) locally constant, so \(f\) is constant with some value \(a\) on some \(L_N\). Compactness of \(\supp(f)\) makes \(\supp(f)\) bounded above, say \(f(m)=0\) for \(m\geq M>N\). Then \(f-a\chi_{L_N}\) is supported in the finite set \(\{N+1,\dots,M-1\}\), and \(\chi_{\{m\}}=\chi_{L_m}-\chi_{L_{m-1}}\). They are independent: if \(\sum_N a_N\chi_{L_N}=0\) with finitely many \(a_N\) nonzero, then evaluating at an integer \(m\) gives \(\sum_{N\geq m}a_N=0\) for every \(m\), and subtracting consecutive relations gives \(a_N=0\) for every \(N\). Hence \(C_c(U_2,\Z)\) is free of rank one over \(R\) on \(\chi_{L_0}\). The case of \(U_1\) is symmetric, with \(R_N\) in place of \(L_N\), and the case of \(U_0=\Z\) is immediate from \(t\chi_{\{m\}}=\chi_{\{m+1\}}\). For \(X\) the same argument, applied to a function that is eventually constant at both ends, shows that \(\chi_X\) together with the \(\chi_{L_N}\) is a basis. Independence follows by evaluating first at \(+\infty\), which kills every \(\chi_{L_N}\), and then as before. Since \(t\chi_X=\chi_X\), the claim follows in all four cases.

\item\label{st:D-homology}
\textbf{The homology groups.} Apply Lemma~\ref{lem:transformation}. On a free \(R\)-module of rank one the map \(t-1\) is injective with cokernel \(\Z\), so
\[
H_0(\G|_{U_1})\cong H_0(\G|_{U_2})\cong H_0(\G|_{U_0})\cong\Z,
\qquad
H_1(\G|_{U_1})=H_1(\G|_{U_2})=H_1(\G|_{U_0})=0 ,
\]
and on \(\Z\oplus R\) the map \(t-1\) has kernel \(\Z\chi_X\) and cokernel \(\Z\oplus\Z\), so that \(H_0(\G)\cong\Z^2\) and \(H_1(\G)\cong\Z\), the latter corresponding under Lemma~\ref{lem:transformation} to the generator \(\chi_X\) of \(\ker(t-1)\). All the homology in degrees \(n\geq2\) vanishes, again by Lemma~\ref{lem:transformation}.

\item\label{st:D-connecting}
\textbf{The connecting map.} By step~\ref{st:D-cover} the cover \(U_1,U_2\) satisfies the hypotheses of Theorem~\ref{thm:C}, which in low degrees gives the exact sequence
\[
0\to H_1(\G)\sto{\partial_1}H_0(\G|_{U_0})
\sto{H_0(\alpha)} H_0(\G|_{U_1})\oplus H_0(\G|_{U_2})
\sto{H_0(\beta)} H_0(\G)\to0 ,
\]
the initial zero because \(H_1(\G|_{U_1})\oplus H_1(\G|_{U_2})=0\) by step~\ref{st:D-homology}. So \(\partial_1\) is injective. The map \(H_0(\alpha)\) sends the generator \([\chi_{\{0\}}]\) of \(H_0(\G|_{U_0})\cong\Z\) to the classes of the extensions by zero of \(\chi_{\{0\}}\). In \(H_0(\G|_{U_2})\) we have \(\chi_{\{0\}}=\chi_{L_0}-\chi_{L_{-1}}=(t-1)\chi_{L_{-1}}\), so that class vanishes, and in \(H_0(\G|_{U_1})\) we have \(\chi_{\{0\}}=\chi_{R_0}-\chi_{R_1}=-(t-1)\chi_{R_0}\), so that class vanishes as well. Hence \(H_0(\alpha)\) is zero, and exactness makes \(\partial_1\) surjective. Being injective and surjective between two infinite cyclic groups, the map \(\partial_1\) is an isomorphism of groups.\qedhere
\end{enumerate}
\end{proof}

The computation locates the phenomenon. By step~\ref{st:D-homology} the group \(H_1(\G)\cong\Z\) is carried by Lemma~\ref{lem:transformation} to the kernel of \(t-1\) on \(C(X,\Z)\), and a function in that kernel is invariant, hence constant on the dense orbit \(\Z\), and then constant on all of \(X\) by continuity at the two points at infinity. Such a function has compact support because \(X\) is compact, and it is the removal of either point at infinity that destroys compactness of the unit space and with it the whole of the first homology. Neither point at infinity lies in the intersection \(U_0\), and each of them lies in one of the sets \(U_1\) and \(U_2\) but not in the other. Removing a point that fails to be isolated is the operation that a clopen invariant cover cannot perform, and it is what the cover above does at each of the two points at infinity. By Theorem~\ref{thm:B} no clopen invariant cover produces a connecting homomorphism that differs from zero, whatever the ample groupoid may be and whatever the coefficient group in which the homology is taken.

\ack
The author's master's thesis \cite{MelodiaThesis}, from which the present paper is drawn, was written at Friedrich-Alexander-Universit\"at Erlangen--N\"urnberg under the supervision of Kang Li and Cath\'erine Meusburger, whom the author thanks. Theorem~\ref{thm:B} corrects the reading of Theorem~3.3.10 of that thesis: the Mayer--Vietoris sequence stated there for clopen invariant covers is exact, as claimed, but it is degenerate and is not a tool for computation. The author also thanks the anonymous referee of an earlier version, who drew attention to \cite{MillerSteinberg2024} and whose objections to the scope of both main results led to the present paper, and in particular to Theorem~\ref{thm:B} and to the computation carried out in \S\ref{sec:example}, neither of which was part of the version of the paper that the referee saw.

\bibliographystyle{srtnumbered}
\bibliography{main}

\end{document}